\numberwithin{equation}{section}
\definecolor{OrangeRed}{cmyk}{0,0.6,1,0}            % half magenta only, full yellow
\definecolor{DarkBlue}{cmyk}{1,1,0,0.20}
\definecolor{DarkGreen}{cmyk}{1,0,0.6,0.2}
\definecolor{myblue}{rgb}{0.66,0.78,1.00}
\definecolor{Violet}{cmyk}{0.79,0.88,0,0}
\definecolor{Lavender}{cmyk}{0,0.48,0,0}
\newtheorem{thm}{Theorem}[section]
\newtheorem{theorem}[thm]{Theorem}
\newtheorem{main theorem}[thm]{Main Theorem}
\newtheorem{corollary}[thm]{Corollary}
\newtheorem{prop}[thm]{Proposition}
\theoremstyle{definition}
\newtheorem{definition}[thm]{Definition}
\newtheorem{remark}[thm]{Remark}
\newtheorem{example}[thm]{Example}
\DeclareMathAlphabet{\mathbbmsl}{U}{bbm}{m}{sl}
\def\C{\mathbb C}
\def\P{\mathbb P}
\def\bcases{\begin{cases}}
\def\ecases{\end{cases}}
\newcommand{\N}{\mathbb N}
\newcommand{\bea}{\begin{eqnarray*}}
\newcommand{\eea}{\end{eqnarray*}}
\newcommand{\be}{\begin{equation}}
\newcommand{\ee}{\end{equation}}
\newcommand{\Jac}{\operatorname{Jac}}
\newcommand{\MM}{\mathcal{M}}
\newcommand{\RR}{\mathcal{R}}
\renewcommand{\epsilon}{\varepsilon}
\renewcommand{\phi}{\varphi}
\newcommand{\xbf}{{\bf x}}
\newcommand{\ebf}{{\bf e}}
\newcommand{\obf}{{\bf 0}}
\newcommand{\zbf}{{\bf z}}
\newcommand{\ybf}{{\bf y}}
\newcommand{\Ybf}{{\bf Y}}
\newcommand{\ubf}{{\bf u}}
\newcommand{\ibf}{{\bf 1}}
\newcommand{\F}{\mathbbmsl{F}}
\begin{document}

\title{The independence polynomial on recursive sequences of graphs}

\author{Mikhail Hlushchanka}
\address{Korteweg-de Vries Instituut voor Wiskunde, Universiteit van Amsterdam,  1090 GE \newline Amsterdam, The Netherlands}
\email{mikhail.hlushchanka@gmail.com}

\author{Han Peters}
\address{Korteweg-de Vries Instituut voor Wiskunde, Universiteit van Amsterdam,  1090 GE \newline Amsterdam, The Netherlands}
\email{h.peters@uva.nl}

\begin{abstract}
    We study the zero sets of the independence polynomial on recursive sequences of graphs. We prove that for a maximally independent starting graph and a stable and expanding recursion algorithm, the zeros of the independence polynomial are uniformly bounded. Each of the recursion algorithms leads to a rational dynamical system whose formula, degree and the dimension of the space it acts upon depend on the specific algorithm. Nevertheless, we demonstrate that the qualitative behavior of the dynamics exhibit universal features that can be exploited to draw conclusions about the zero sets.
\end{abstract}

\maketitle

\section{Introduction}

\subsection{Motivation and main result.} In statistical physics, one typically considers the normalized limit of graph polynomials for a sequence of graphs $(G_n)_{n\ge 0}$ that converge in an appropriate sense to an infinite limit graph. Examples include the free energy of the partition function for the Ising, hard-core, or Potts models on increasing subgraphs of a regular lattice. Besides computing the actual values of these graph polynomials, which is often infeasible for large graphs, a primary interest is a qualitative description of the limiting free energy, including a characterization of real parameters where the free energy is non-analytic, the so-called phase transitions. Obtaining such qualitative information is notoriously difficult for many graph polynomials, even for seemingly simple limit graphs such as the square lattice.

\medskip

There have been a number of successful studies in the last decade where the authors have considered sequences of graphs that do not converge to a regular lattice but are instead recursively defined. Notable examples of such studies include the partition function of the Ising model and the chromatic polynomial on the diamond hierarchical lattice~\cite{BLRI, BLRII, chio2021chromatic}, the hard-core model on Cayley trees~\cite{Rivera, Willigen}, and the Ising model on Sierpi\'{n}sky gasket graphs and Schreier graphs~\cite{DDN2011}, and the spectrum of the Laplace operator on graphs corresponding to self-similar groups~\cite{Bac2023}. In all of these studies, the recursive nature of the graph sequences leads to an iterative scheme for the corresponding graph polynomials. By analyzing such an induced dynamical system,
the authors in~\cite{BLRI, BLRII, chio2021chromatic, Rivera} successfully described the zero sets of the graph polynomials and as a consequence the (non-)analyticity of the limiting free energy.

\medskip

The zeros of graph polynomials are also essential in understanding the computational complexity of the evaluation of graph polynomials, see \cite{PatelRegts}. In particular, such zeros for sequences of recursive graphs played a crucial role behind the scenes in the results from \cite{GalanisInapproximability,PetersRegts}, which focus on the complexity of approximating the independence polynomial values for bounded degree graphs.

\medskip

This paper aims to establish a unified framework for describing graph polynomials on sequences of recursive graphs. We focus specifically on one type of graph polynomial --  the independence polynomial, also known as the partition function of the hard-core model. We introduce quite general (recursive) algorithms for generating recursive graph sequences and analyze the resulting zero sets.

Informally, our algorithm works in the following way. Starting with a graph $G_0$ with $k$ marked vertices labeled $1, \ldots, k$, we iteratively construct a sequence of graphs $(G_n)_{n\geq 0}$, each with $k$ marked vertices. Namely, at each step we take $m$ copies of the previous graph $G_n$, connect some of the $m\cdot k$ marked vertices with identical labels in these copies, and afterward assign $k$ new marked vertices in the resulting graph (in a pre-described manner) to obtain the graph $G_{n+1}$.

\begin{theorem}[Main result]
The graph recursion $\RR$ defined by any abstract gluing data $(H,\Sigma, \Upsilon, \Phi)$ with parameters $m,k\geq 2$ induces a dynamical system $F: \C^{(2^k)}\to \C^{(2^k)}$ given by a homogeneous polynomial map of degree $m$. The resulting rational map $\widehat F$ on $\P^{(2^k-1)}$ leaves invariant a manifold $\MM$ of dimension $k$; this manifold depends only on the reduced gluing data $(H, \Phi)$.

Furthermore, if the gluing data is stable and expanding, then there is a submanifold $\MM_0\subset \MM$ and an iterate $n$ such that $\MM_0$ consists of fixed points for $\widehat F^n$ and it is normally superattracting.

As a consequence, the zeros of the independence polynomials of a recursive graph sequence $(G_n)_{n\geq 0}$, $G_{n+1}=\RR(G_n)$, are bounded for any maximally independent starting graph $G_0$.
\end{theorem}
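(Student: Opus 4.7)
The plan is to split the argument into three stages corresponding to the three claims in the statement. For the first stage, I index the coordinates of $\C^{(2^k)}$ by subsets $S\subseteq\{1,\ldots,k\}$ and set $Z_S(G,\lambda)$ equal to the weighted sum over independent sets $I$ of $G$ whose trace on the marked vertex set is exactly $S$. The tuple $(Z_S(G_n,\lambda))_S$ is the natural state vector. When $G_{n+1}$ is obtained by taking $m$ copies of $G_n$ and gluing them according to $(H,\Sigma,\Upsilon,\Phi)$, an independent set in $G_{n+1}$ decomposes uniquely into a tuple of independent sets, one per copy, whose marked traces jointly form an independent set of $H$. Summing the corresponding weights expresses each $Z_{S'}(G_{n+1})$ as a homogeneous polynomial of degree $m$ in the variables $Z_S(G_n)$, since each copy contributes exactly one factor. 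This defines $F$, and the $\Upsilon$-induced identifications together with the relabeling by $\Phi$ cut out the invariant locus $\MM$; a direct parameter count then shows that after projectivization $\MM$ carries $k$ free parameters, establishing $\dim\MM=k$ and its dependence only on $(H,\Phi)$.

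In the second stage I would exploit stability and expansion to produce $\MM_0$. Stability should supply a finite collection of algebraic conditions preserved by $F$ whose common vanishing locus inside $\MM$ is a compact invariant subset; passing to a suitable iterate $n$ turns this subset into a set of fixed points of $\widehat F^n$. Expansion then feeds in the spectral information: at each point of $\MM_0$ the derivative $D\widehat F^n$ respects a direct sum splitting into the tangent space to $\MM_0$ (on which it is the identity by the fixed-point property) and a normal complement in $T\P^{(2^k-1)}$. I would linearize $F$ near a typical orbit and argue that the expansion condition forces the eigenvalues on the normal summand to vanish, which is precisely normal superattraction. Verifying that the combinatorial notions of stability and expansion translate uniformly across all of $\MM_0$ into these analytic spectral statements is the step I expect to require the most care, particularly because $\MM_0$ need not be a single orbit and the normal bundle must be controlled globally along it.

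For the third stage, I observe that $Z(G_n,\lambda)=\sum_S Z_S(G_n,\lambda)$, so the zeros of the independence polynomial $Z(G_n,\cdot)$ correspond exactly to the values of $\lambda$ for which the projective orbit $\widehat F^n([Z_S(G_0,\lambda)]_S)$ meets the hyperplane $\Lambda=\{\sum_S w_S=0\}$. If $G_0$ is maximally independent, the initial point $[Z_S(G_0,\lambda)]_S$ lies on $\MM$ for every $\lambda$ and depends holomorphically on $\lambda$. I would then verify that $\MM_0$ is disjoint from $\Lambda$, and use the normal superattraction of $\MM_0$ together with a compactness argument on the Riemann sphere to show that for $|\lambda|$ sufficiently large the orbit converges uniformly to $\MM_0$ and therefore cannot meet $\Lambda$ at any iterate. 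This forces all zeros of $Z(G_n,\lambda)$ across $n$ to lie in a disk of some fixed radius $R$, yielding the desired uniform bound.
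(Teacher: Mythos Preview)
Your outline has two genuine gaps that would prevent the argument from going through.

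First, the invariant manifold $\MM$ is not ``cut out by the $\Upsilon$-induced identifications together with the relabeling by $\Phi$'' as you suggest. In the paper, $\MM$ is defined by the explicit multiplicative relations
\[
\frac{(x_1,\ldots,x_k)_n}{(0,\ldots,0)_n}=\prod_{j:\,x_j=1}\frac{(\ebf_j)_n}{(\obf)_n},
\]
and these equations are \emph{independent} of the gluing data; the nontrivial content of the first stage is proving that this specific product structure is preserved by the recursion (Proposition~2.1). Your parameter count does not recover this, and without it you have no workable description of $\MM$ on which to run the second stage. Relatedly, stability and expansion are used in a much more concrete way than you indicate: stability (singleton connecting graphs at periodic labels, with $\#\Phi(j)=1$) makes the induced dynamics on the $[\ebf_j]$-coordinates a pure permutation on the periodic labels, which is what produces $\MM_0$; expansion (disjointness of the edges $\Phi(j)$) is exactly the combinatorial condition that makes the first-order error in the defining equations above cancel after one iterate, forcing all normal eigenvalues to be zero (Proposition~3.2). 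Neither of these mechanisms is a generic spectral argument; both rely on the explicit form of $\MM$.

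Second, your use of the maximally independent hypothesis is incorrect: it does \emph{not} place the initial point $[Z_S(G_0,\lambda)]_S$ on $\MM$ for any finite $\lambda$. The condition $\#I(1,\ldots,1)-\#I(0,\ldots,0)=k$ is a statement about the \emph{degrees} (and leading coefficients, via uniqueness) of the polynomials $(x_1,\ldots,x_k)_0$ in $\lambda$; after the rescaling $\llparenthesis\xbf\rrparenthesis_0=\lambda^{-\sum x_j}(\xbf)_0$ it guarantees only that the starting point tends to $[1:\cdots:1]$ as $\lambda\to\infty$. The point $[1:\cdots:1]$ lies on $\MM_0$, so for large $|\lambda|$ the orbit begins \emph{near} $\MM_0$ in the full ambient space $\P^{(2^k-1)}$, and it is the normal superattraction of $\MM_0$ (not invariance of $\MM$) that traps it there. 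Since the limiting fixed point, back in the original coordinates, is close to $[0:\cdots:0:1]$---where the $(\ibf)$-coordinate dominates---the sum $\sum_\xbf(\xbf)_n$ cannot vanish. Your version, which assumes the orbit already lives on $\MM$, would need a separate argument you do not have.
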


The graph recursion with respect to a gluing data will be formally defined later in the introduction, along with the notions of stability, expansiveness, and maximally independence.

\subsection{Independence polynomial}\label{ss:independence-poly} In this paper, unless otherwise stated, by a \emph{graph} $G$ we always mean a multi-graph, that is, $G$ is given by a pair $(V(G), E(G))$, where $V(G)$ is a finite \emph{vertex set} and $E(G)$ is a finite \emph{edge multi-set} of undordered pairs in $V(G)$.  As usual, the \emph{vertex degree} of $v\in V(G)$
is the number of adjacent edges, that is, the number of pairs $\{v,w\}\in E(G)$.
We call any map $\sigma: V(G)\to \{0,1\}$ a \emph{vertex assignment} on $G$. We say that such $\sigma$ is \emph{independent} if $\sigma^{-1}(1):=\sigma^{-1}(\{1\}) \subseteq V(G)$ forms an \emph{independent set} in $G$, that is, if no two vertices in $\sigma^{-1}(1)$ form an edge of $G$.

The (\emph{univariant}) \emph{independence polynomial} of $G$ is defined as
$$
Z_G(\lambda):=\sum_{\sigma} \lambda^{\#\sigma^{-1}(1)},
$$
where the sum is taken over all independent vertex assignments on $G$. The independence polynomial corresponds to the partition function of the \emph{hard-core model}, a model from statistical physics commonly used to model gasses.

The zeros of the independence polynomials $Z_G(\lambda)$ play an important role in both the physical models and in questions regarding the computational hardness of estimating the value of $Z_G(\lambda)$ at a given parameter $\lambda$. Given a sequence of graphs $(G_n)_{n \ge 0}$, we define the \emph{limiting free energy} by
$$
\rho(\lambda) = \lim_{n \rightarrow \infty} \frac{\log|Z_{G_n}(\lambda)|}{\#V(G_n)},
$$
provided this limit exists. It is said that the graph sequence $(G_n)_{n \ge 0}$ exhibits a \emph{phase transition} at a real parameter $\lambda_0 >0$ when the real function $\rho(\lambda)$ is not real analytic at $\lambda_0$. We emphasize that formally we always consider phase transitions of a \emph{sequence of graph} rather than that of a limiting object such as an infinite graph or a metric space. Results of Lee-Yang \cite{YangLee, LeeYang} showed that for suitable sequences of graphs, absence of zeros near $\lambda_0$ implies the absence of a phase transition.

An important topic in theoretical computer science concerns the effective computation of graph polynomials for large graphs. It turns out that the exact computation of partition functions is almost always \emph{$\#P$-hard}. The question of whether partition functions can be effectively approximated, up to some multiplicative error, has received extensive attention in the recent literature, see for example the related paper~\cite{GalanisInapproximability}. A relationship between the absence of \emph{complex} zeros and the existence of polynomial algorithms for the approximation of partition functions was shown by Patel and Regts \cite{PatelRegts}.

The computational hardness and the description of the zero sets for the independence polynomial has been extensively studied for the set of all graphs whose vertex degrees are bounded by some uniform constant $\Delta \ge 2$. It was shown by Regts and the second author \cite{PetersRegts} that complex zeros are absent in a neighborhood of the real interval between two critical parameters $\lambda_- < 0 < \lambda_+$, which implies the existence of a polynomial time algorithm for approximating $Z_G(\lambda)$ for $\lambda\in (\lambda_-,\lambda_+)$. On the other hand, it was shown by Bezakova et.\ al.\ \cite{BezakovaEtAl,BoerEtAlApprox} that zeros are dense outside of a bounded domain $U_\Delta$, which relates to approximation of $Z_G(\lambda)$ being $\#P$-hard outside of $U_\Delta$. Both of these results rely on understanding the behavior of a dynamical system induced by different sequences of recursive graphs.

We end this discussion by mentioning the recent papers \cite{HelmuthPerkinsRegts,BoerEtAlTorus} in which the authors discuss the boundedness of zeros of the independence polynomial for different sequences of graphs converging to cubic lattices. These results are closely related to what we prove here for recursive sequences of graphs.

\subsection{Recursive definition}\label{ss:rec-definition}

Let us formally describe an abstract recursive procedure for producing sequences of graphs that we study in this paper. Each graph in such a sequence $(G_n)_{n \ge 0}$ will have $k\ge 2$ distinct marked vertices, labeled $1,\dots, k$. A recursive step will consist of taking $m \ge 2$ copies of the previous graph and connecting some of the copied marked vertices according to a specific rule, while also marking and labeling $k$ vertices of the resulting graph. This construction will be governed by a \emph{gluing data} $(H, \Sigma, \Upsilon, \Phi)$, where $H$ will specify which marked vertices of the $m$ copies are joined, the pair $(\Sigma, \Upsilon)$ will specify how they are joined, and $\Phi$ will specify how the new marked vertices are assigned.

\begin{definition}[Gluing data $(H, \Sigma, \Upsilon, \Phi)$]\label{defn: formal data}
A (\emph{formal}) \emph{gluing data} (with parameters $m\geq 2$ and $k\geq 2$) is a quadruple $(H, \Sigma, \Upsilon, \Phi)$  where each item is specified as follows:
\begin{itemize}
    \item $H$ is a multi-hypergraph with the vertex set $V(H)=\{w_1, \ldots, w_m\}$ and edge multi-set $E(H)$. This means that the edges of $H$ are given by non-empty subsets $e \subseteq V(H)$, where a single subset may occur multiple times in $E(H)$. We assume that each vertex of $H$ is  contained in exactly $k$ edges, and that each of these edges $e$ has a distinct label $\ell(e)\in\{1,\ldots, k\}$. In other words, for each $j\in \{1,\ldots, k\}$, the edges of $H$ labeled $j$ specify a partition of $V(H)=\{w_1,\ldots,w_m\}$. We call the labeled multi-graph $H$ the \emph{gluing scheme}.

    \item $\Sigma=(\Sigma_e)_{e\in E(H)}$ is a collection of (non-empty) connected multi-graphs, which we call \emph{connecting graphs}. We assume that each $\Sigma_e$ contains a ``special'' marked vertex $v_e\in V(\Sigma_e)$, which we call the \emph{root} of $\Sigma_e$.

%For each edge $e \in E(H)$, let $\Sigma_e$ be a connected graph with $|e|$ marked vertices whose labels correspond to the vertices contained in $e$. We allow the vertices of $\Sigma_e$ with different labels to coincide, for example, $\Sigma_e$ may consist of a single vertex that is assigned labels corresponding to all the vertices in $e$. We also assume that $\Sigma_e$ contains one other ``special'' marked vertex $v_e$, which may or may not coincide with the other marked vertices of $\Sigma_e$.% labeled $\{1, \ldots, s\}$, and assume that $\Sigma_t$ is equipped with a graph automorphism that acts $s$-cyclically on $V(\Sigma_s)$, that leaves invariant the set of marked vertices, and whose induced action on the marked vertices is given by the cycle $(1,\ldots, s)$. Depending on the other data, we may also assume that $\Sigma_s$ contains another marked vertex that is fixed under the cyclic action.

    \item $\Upsilon = (\Upsilon_e)_{e\in E(H)}$ is a collection of (not necessarily injective) maps $\Upsilon_e: e \to V(\Sigma_e)$, which we call \emph{attaching maps}.

    \item Finally, $\Phi: \{1, \ldots, k\} \rightarrow E(H)$ is an injective map, which we call the \emph{labeling map}.
\end{itemize}
We call the pair $(H,\Phi)$ as above the \emph{reduced gluing data} (with parameters $m,k$).
\end{definition}

We now specify the recursive step governed by formal gluing data.

\begin{definition}[Graph recursion $\RR$] Let $(H, \Sigma, \Upsilon, \Phi)$ be a gluing data with parameters $m\geq 2$ and $k\geq 2$ as above, and suppose that we are given a graph $G$ with $k$ distinct marked vertices $v_1, \ldots, v_k$. Here and below, we think of each vertex $v_j$ as having the label $j$. We now define a new marked graph $\RR(G)=\RR_{(H, \Sigma, \Upsilon, \Phi)}(G)$ as follows:
\begin{enumerate}[label=(\Roman*)]
    \item First, we take $m$ copies $G(1),\dots, G(m)$ of $G$, where each copy $G(i)$ is associated to the vertex $w_i$ of the gluing scheme $H$.

    \item Then for each edge $e = \{w_{i_1}, \ldots, w_{i_s}\} \in E(H)$ with label $\ell(e)\in\{1,\dots,k\}$, we connect together the marked vertices labeled $\ell(e)$ in the copies $G(i_1),\ldots,G(i_s)$ by identifying them with the vertices $\Upsilon_e(w_{i_1}),\ldots,\Upsilon_e(w_{i_s})$ in $\Sigma_e$, respectively. We denote the obtained graph $\RR_{(H,\Sigma,\Upsilon)}(G)$. We observe that, by construction, the edges of $H$ containing only a single vertex do not lead to connections between vertices of different copies of $G$.

    \item Finally, we assign $k$ marked vertices in the graph $\RR_{(H,\Sigma,\Upsilon)}(G)$ according to the labeling map $\Phi$: the label $j \in \{1, \ldots, k\}$ is given to the vertex of $\RR_{(H,\Sigma,\Upsilon)}(G)$ that corresponds to the root of $\Sigma_{e}$, where $e=\Phi(j) \in E(H)$.
\end{enumerate}
The resulting marked graph, which we denote $\RR(G)=\RR_{(H, \Sigma, \Upsilon, \Phi)}(G)$, is said to be obtained from $G$ according to the gluing data $(H, \Sigma, \Upsilon, \Phi)$.
\end{definition}

In this way, given a gluing data $(H, \Sigma, \Upsilon, \Phi)$ with parameters $m, k$ and an arbitrary starting graph $G_0$ with $k$ distinct marked vertices labeled $1,\dots, k$, we produce a sequence $(G_n)_{n\geq 0}$ of marked graphs defined by $G_{n+1} := \RR_{(H, \Sigma, \Upsilon, \Phi)}(G_0)$, each with $k$ distinct marked vertices labeled $1,\dots, k$.

%\begin{figure}[t]    %\includegraphics[width=0.8\textwidth]%{Sierpinsky012.png}
%\end{figure}

\begin{figure}[t]
\begin{overpic}[width=\textwidth]{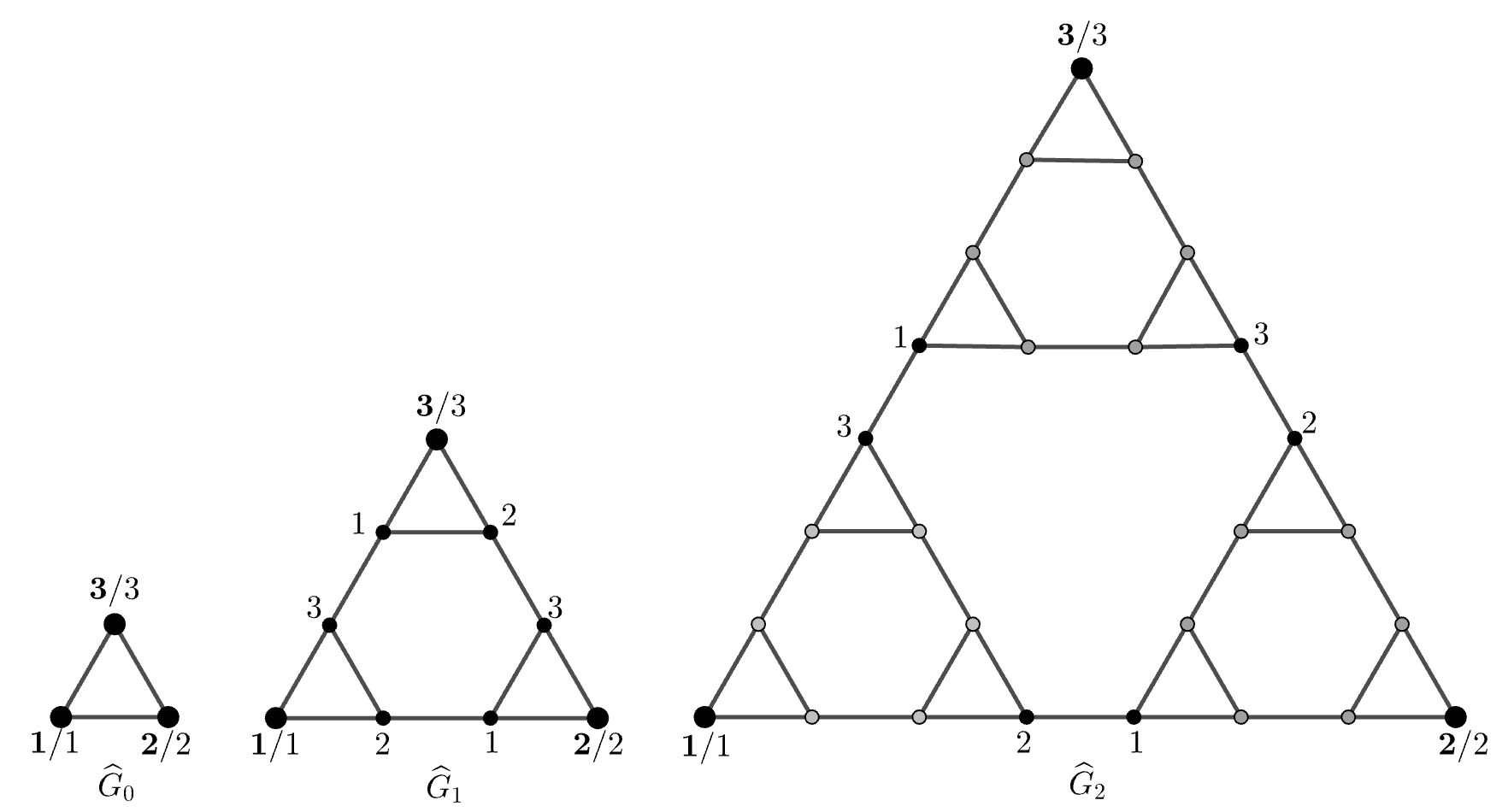}
%        \put(47,22){$f_\square$}
%        \put(-4,22){$T$}
\end{overpic}
\begin{overpic}[width=\textwidth]{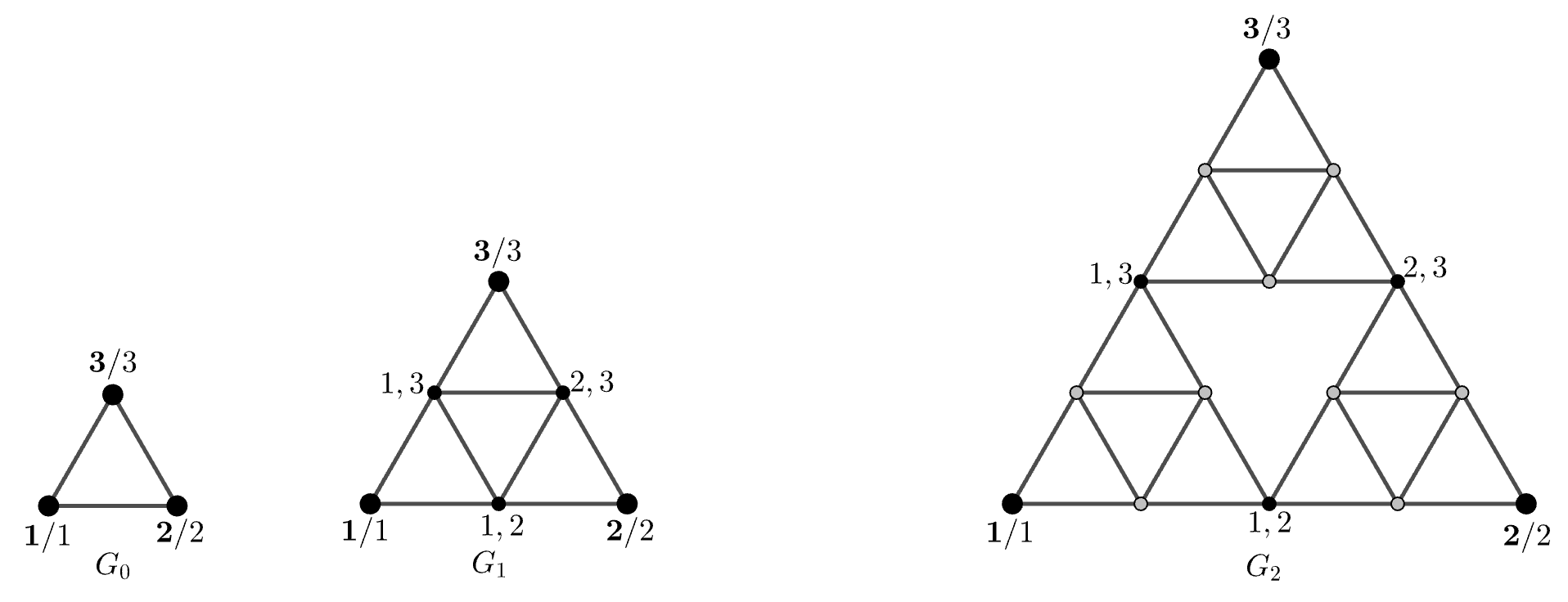}
%        \put(47,22){$f_\square$}
%        \put(-4,22){$T$}
\end{overpic}
    \caption{Illustration of the recursion for the Hanoi graphs $(\widehat{G}_{n})_{n=0,1,2}$ on top, and the Sierpi\'{n}ski gasket graphs $(G_{n})_{n=0,1,2}$ at the bottom. The thicker black vertices with their bold labels correspond to the marked vertices in each graph, labeled $1,2,3$. The black vertices all together with their normal font labels correspond to the marked vertices of the copies of the graph from the previous step.}
    \label{fig: sierpinsky-hanoi}
\end{figure}

\begin{remark}
    There are several natural canonical choices for the connecting graphs $\Sigma_e$, each depending only on the cardinality $s = \#e$.  The first option are the \emph{$s$-pods} $K_{1,s}$ -- the complete $(1,s)$-bipartite graphs. The $s$-pods are used when defining \emph{Cayley trees}. The second option are the \emph{$s$-cycles}, which are used to construct \emph{Schreier graphs}. Other natural options include the \emph{complete graphs} $K_s$ or simply the
    \emph{singletons} $K_1$. In the latter case, the corresponding marked vertices in the $s$ copies of the original graph are identified to a single vertex. This choice is, for example, used to define \emph{diamond hierarchical lattices} or other \emph{hierarchical graph systems}. Figure \ref{fig: sierpinsky-hanoi} illustrates the impact of a specific choice of the graphs $\Sigma_e$. The two sequences of graphs in the figure are obtained using the same starting graphs $G_0=\widehat G_0=K_3$ and the same reduced gluing data $(H,\Phi)$, but using different canonical choices for the connecting graphs $\Sigma_e$: the complete graphs (top row) and the singletons (bottom row). In the former case, one obtains the \emph{Hanoi graphs}, and in the latter case the \emph{Sierpi\'{n}ski gasket graphs}.
\end{remark}

\begin{definition}[Dynamics on the label set]
    A reduced gluing data $(H,\Phi)$ with parameters $m, k$ specifies a dynamical system $\Lambda=\Lambda_{(H,\Phi)}: \{1,\ldots, k\}\to \{1,\ldots, k\}$ on the labels: namely, we send each label $j\in \{1,\dots, k\}$ to $\Lambda(j)=\ell(\Phi(j))$. A label $j$ is called \emph{periodic} if $\Lambda^{p}(j)=j$ for some $p\geq 1$. Clearly, every label is eventually mapped to a periodic one by $\Lambda$.

    The number $\#\Phi(j)$ is called the \emph{local degree} of a label $j$ and the number $\#\Phi(j)-1$ is called the \emph{multiplicity} of $j$.   We say that $j$ is \emph{critical} if $\#\Phi(j)\geq 2$. Similarly to ramification portraits for rational maps, we define the \emph{portrait} of the reduced gluing data $(H,\Phi)$: it is a directed graph with the vertex set $\{1,\dots, k\}$, where we put a directed edge from every $j\in \{1,\dots, k\}$ to  $\Lambda(j)=\ell(\Phi(j))$ and label it by ``$\#\Phi(j):1$'' if $\#\Phi(j)\geq 2$.

    %A label (or a marked vertex) is called \emph{periodic} if it is periodic under the above dynamics. %\textcolor{red}{Not sure if we need this:} \textcolor{blue}{We also say that a label (or a marked vertex) is of \emph{Fatou type} if it eventually lands in a periodic critical label under the induced dynamics; otherwise, we say that the label is of \emph{Julia type}}.
\end{definition}

    We can now introduce the technical assumptions on the gluing data that are necessary for our main result.

\begin{definition}[Non-degenerate, stable, and expanding gluing data] Suppose $(H, \Sigma, \Upsilon, \Phi)$ is a gluing data with parameters $m, k$. We say that it is:
\begin{itemize}
    \item \emph{non-degenerate} if the induced dynamical system $\Lambda=\Lambda_{(H,\Phi)}$ has no periodic critical labels;

    \item \emph{stable} if it is non-degenerate and the connecting graph $\Sigma_{\Phi(j)}$ for each periodic label $j\in \{1,\dots, k\}$ is a singleton;

    \item \emph{expanding} if there exists $n\geq 1$ such that the gluing data $(H_n, \Sigma_n, \Upsilon_n, \Phi_n)$ of the $n$-th iterate of the graph recursion $\RR=\RR_{(H, \Sigma, \Upsilon, \Phi)}$ satisfies the following condition: the edges $\Phi_n(j)$ and $\Phi_n(\ell)$ form disjoint subsets of $V(H_n)$ for all distinct labels $j,\ell \in \{1,\dots, k\}$.
\end{itemize}
\end{definition}

\begin{remark} Suppose $(H, \Sigma, \Upsilon, \Phi)$ is an arbitrary gluing data with parameters $m,k$, and
$(H, \Sigma', \Upsilon', \Phi)$ is the ``simplified'' gluing data where each graph $\Sigma_e$, $e\in E(H)$, is replaced by a singleton graph $\Sigma'_e=K_1$. Let $(G_n)_{n\geq 0}$ be a sequence of marked graphs defined by this ``simplified'' gluing data, starting with some connected graph $G_0$. Then the following are true:
\begin{enumerate}[label=(\roman*)]
    \item The gluing data $(H, \Sigma, \Upsilon, \Phi)$ is non-degenerate if and only if the vertex degrees of $G_n$ are uniformly bounded.
    \item The gluing data $(H, \Sigma, \Upsilon, \Phi)$ is expanding if and only if the marked vertices in $G_n$ get separated as $n\to \infty$. More formally, suppose $v_1^n,\dots, v_k^n$ denote the vertices labeled $1,\dots, k$ in $G_n$, respectively. We say that the \emph{marked vertices get separated} in $G_n$ (as $n\to \infty$) if for all distinct labels $j,\ell\in \{1,\dots, k\}$ the graph-distance between $v_{j}^n$ and $v_{\ell}^n$ in $G_n$ goes to $\infty$ as $n\to \infty$. In particular, expansion is determined just by the reduced gluing data $(H, \Phi)$.
\end{enumerate}
\end{remark}

\subsection{Maximally independent starting graphs}
We now provide the final technical notion needed for our main result.

Let $G$ be a graph with $k$ marked vertices labeled $1, \ldots, k$, and let $\xbf = (x_1, \ldots, x_k) \in \{0,1\}^k$ be a binary $k$-tuple. We say that an independent set $I\subset V(G)$ \emph{agrees with $\xbf$} if the following condition is satisfies for each $j \in \{1, \ldots, k\}$: the marked vertex labeled $j$ is contained in $I$ if and only if $x_j = 1$.

\begin{definition}[Maximally independent graph]
    A graph $G$ with $k$ marked vertices (labeled $1, \ldots, k$) is said to be \emph{maximally independent} if for each binary $k$-tuple $\xbf$ there is among all independent subsets of $V(G)$ that agree with $\xbf$ a unique independent subset $I(\xbf)$ of maximal cardinality and if, in addition, the following identity holds:
    \begin{equation}\label{eq:maximally independent}
    \# I(1, \ldots, 1) - \# I(0, \ldots, 0) = k.
    \end{equation}
\end{definition}

\begin{remark} The inequality
   $$
    \# I(1, \ldots, 1) - \# I(0, \ldots, 0) \leq k.
    $$
is automatically satisfied, simply because the $k$ marked vertices can be removed from the independent set $I(1, \ldots, 1)$. By applying the same argument twice it follows that condition \eqref{eq:maximally independent} implies
    $$
    \#I(x_1, \ldots, x_k) - \#I(0, \ldots, 0) = \sum_{j=1}^k x_j
    $$
for any binary $k$-tuple $(x_1, \ldots, x_k) \in \{0,1\}^k$.
\end{remark}

We close the introduction with another example.

\begin{example}
    Let us define the \emph{Chebychev recursion} as follows. Starting with a graph $G_0$ with two marked vertices labeled $1,2$, we define $G_{n+1}$ by first taking two copies of $G_n$, then identifying the vertices with labels $2$ in these copies, and afterward assigning the new labels $1$ and $2$ to the two vertices previously labeled $1$.

    If the starting graph $G_0$ is $K_2$, then $G_n$ is simply a path whose length grows exponentially as $2^{n}$. None of these paths are maximally independent, and in fact, the zeros of the corresponding independence polynomials, which all happen to lie on the negative real axis, are unbounded.

    On the other hand, if $G_0$ is a tripod $K_{1,3}$, with marked vertices at two of the three leaves, then $G_1$ is maximally independent, and our main result implies that the zeros of the independence polynomials $Z_{G_n}(\lambda)$ are uniformly bounded.
\end{example}

    The Chebychev recursion from above shows that boundedness of the zeros of the independence polynomials $Z_{G_n}(\lambda)$ for a recursive sequence of graphs $(G_n)_{n\geq 0}$ may depend on the starting graph $G_0$. It also shows that our main result is false when the assumption on $G_0$ being maximally independent is removed.

\subsection*{Acknowledgments} The first author was  partially supported by the Marie Sk\l{}odowska-Curie Postdoctoral Fellowship under Grant No.\ 101068362. The authors would also like to thank Daniel
Meyer, Nguyen-Bac Dang, Guus Regts, and Palina Salanevich for various valuable discussion and comments.

\section{Graph recursion leads to rational iteration}

In this section, we will translate the recursion $G_{n+1} = \mathcal{R}(G_n)$ into a rational dynamical system, prove the existence of an invariant manifold, and describe the dynamics on the invariant manifold in terms of the gluing data. %We will also give an intuitive understanding of the invariant manifold by interpreting the \emph{probabilities} of independent subsets.

\subsection{Dynamical representation of the independence polynomial}\label{ss:recursion-indep-poly}

From now on, we fix a gluing data $(H, \Sigma, \Upsilon, \Phi)$ with parameters $m,k$ and assume that $(G_n)_{n\geq 0}$ is a sequence of marked graphs obtained from a starting marked graph $G_0$ according to this gluing data (see Section~\ref{ss:rec-definition}). We also assume that $V(H)=\{w_1,\dots,w_m\}$, and denote by $G_n(1), \ldots, G_n(m)$ the $m$ copies of $G_n$ that are used to construct $G_{n+1}$. Our first goal in this section is to introduce a (polynomial) dynamical system induced by the gluing data. First, we set up some notation.

Suppose $G$ is a graph with $s\geq 1$ marked vertices $v_1, \ldots, v_s$. In the following, we will typically identify a vertex assignment $\tau: \{v_1, \ldots, v_s\} \rightarrow \{0,1\}$ to the marked vertices of $G$ with the ordered list $(\tau(v_1),\dots,\tau(v_s))$ of its values. We will also write
$$
Z_G(\lambda, \tau) = Z_G(\lambda, \tau(v_1), \ldots, \tau(v_s)) = \sum_{\sigma \sim \tau} \lambda^{\#\sigma^{-1}(1)},
$$
where $\sigma \sim \tau$ means that we only sum over those independent vertex assignments $\sigma$ on $G$ whose restriction to the marked vertices equals $\tau$. Note that $Z_G(\lambda, \tau) = 0$ whenever $\tau$ assigns $1$ to two marked vertices that are adjacent in $G$. We also observe that
$$
Z_G(\lambda) = \sum_{\tau} Z_G(\lambda, \tau),
$$
where the sum is taken over all $2^s$ possible assignments $\tau$ to the marked vertices.

We denote the $k$ marked vertices of each graph $G_n$ by $v_1^n, \ldots, v_k^n$, so that $v^n_j$ is labeled $j$ for each $j\in\{1,\dots,k\}$. Given any binary sequence $(\xbf)=(x_1,\dots, x_k)\in \{0,1\}^k$, we will use the notation
$$
(\xbf)_n = (x_1, \ldots, x_k)_n = Z_{G_n}(\lambda, x_1, \ldots, x_k),
$$
so that
\begin{equation}
\label{eq: sum variables}
Z_{G_n}(\lambda) = \sum_{(\xbf) \in \{0,1\}^k} (\xbf)_n.
\end{equation}

%Given $(x_1, \ldots, x_k) \in \{0,1\}^k$ we define
%\begin{equation}
%\label{eq:variable_x}
%(x_1, \ldots, x_k)_n := \sum_{\sigma \sim (x_1, \ldots, x_k)} \lambda^{\#\sigma^{-1}\{1\}},
%\end{equation}
%where the sum is taken over all \emph{independent} vertex assignments $\sigma: V(G_n) \rightarrow \{0,1\}$ that agree with the assignment $(x_1, \ldots, x_k)$ on the marked vertices of $G_n$, i.e. $\sigma(v_j(n)) = x_j$. When there is no such independent assignment (this happens if and only if we have $x_i = x_j = 1$ for two adjacent marked vertices $v_i(n), v_j(n)$ in $G_n$), we set $(x_1, \ldots, x_k)_n = 0$. Observe that
%\begin{equation}
%\label{eq:sum variables}
%Z_{G_n}(\lambda) = \sum_{({\bf x}) \in \{0,1\}^k} (x_1, \ldots, x_k)_n.
%\end{equation}

%It turns out that the $2^k$ variables $({\bf x})_{n+1}$ can be expressed in terms of the $2^k$ variables $({\bf x})_n$ via a homogeneous polynomial map.

%Moreover, for each $i\in \{1,\dots, m\}$ we denote by $v_1(i),\dots v_k(i)$ the marked vertices in $G_n(i)$.
Clearly, every (independent) vertex assignment on $G_{n+1}$ induces (independent) vertex assignments on the copies $G_n(1), \ldots, G_n(m)$ of $G_n$ and also on each connecting graph $\Sigma_e, e\in E(H)$. This allows us to write the following recursive formula for $(\xbf)_{n+1}$:
\begin{equation}\label{eq: recursion}
    (\xbf)_{n+1} = \sum_{\Ybf=\left(\ybf(1),\dots,\ybf(m) \right)}  \left(\prod_{i=1}^m (\ybf(i))_n\right) \cdot \left( \prod_{e \in E(H)} \frac{Z_{\Sigma_e}(\lambda, \Ybf|_e, \xbf|_e)}{\lambda^{\|\Ybf|_e\|}}\right).
\end{equation}
In the above equation, we use the following notation:
\begin{itemize}
   \item Each $\left(\ybf(i) \right) = \left(y_1(i),\dots, y_k(i)\right)$, $i\in \{1,\dots, m\}$, denotes an arbitrary vertex assignment to the marked vertices $v_1^n(i),\dots v_k^n(i)$ of $G_n(i)$. That is, we sum up over all binary sequences  $\Ybf=\left(\ybf(1),\dots,\ybf(m)\right) \in \{0,1\}^{mk}$.
   \item For each $e=(w_{i_1},\dots, w_{i_s})\in E(H)$ with label $\ell=\ell(e)$, $\Ybf|_e=\big(y_\ell(i_1),\ldots ,y_\ell(i_s)\big)$ denotes the assignment to the vertices $v_\ell^n(i_1),\dots v_\ell^n(i_s)$ induced by $\Ybf$. We also denote by $\xbf|_e$ the assignment to the root of $\Sigma_e$ induced by $(\xbf)$. The latter assignment is non-empty only when $\Phi(j)= e$ for some $j\in \{1,\dots, k\}$.
    \item For each $e\in E(H)$, we define
$$Z_e(\lambda, \Ybf, \xbf) =Z_{\Sigma_e}(\lambda, \Ybf|_e, \xbf|_e) = \sum_{\sigma}{\lambda^{\#\sigma^{-1}(1)}},$$
where we sum over all independent vertex assignments $\sigma$ on $\Sigma_e$ that agree with the restrictions imposed by $\Ybf$ and $\xbf$. Namely, $\sigma$ should satisfy all of the following conditions:
\begin{enumerate}[label=(\roman*)]
    \item If $\ell=\ell(e)\in \{1,\dots, k\}$ is the label of $e$, then $\sigma(\Upsilon_e(w_i)) = y_{\ell}(i)$ for each $w_i\in e$.
    \item If $\Phi(j)=e$ for some $j\in \{1,\dots, k\}$, then $\sigma(v_e) = x_{j}$.
\end{enumerate}
Note that, by definition, $Z_e(\lambda, \Ybf, \xbf) = 0$ whenever these restrictions are inconsistent.
\item Finally, for each $e=(w_{i_1},\dots, w_{i_s})\in E(H)$ with label $\ell=\ell(e)$, we define
$$\|\Ybf|_e\|:=\#V(\Ybf|_e = 1),$$
%- \#\Upsilon_e\left(V(\Ybf|_e = 1)\right),$$
where $V(\Ybf|_e = 1)$ denotes the subset of those elements of $\{v^n_\ell(i_1),\dots, v^n_\ell(i_s)\}$ to which $\Ybf$ assigns $1$. In other words, $\|\Ybf|_e\|$ is the $1$-norm of $\Ybf|_e$. %By definition, $\|\Ybf|_e\|_1=0$ when $V(\Ybf|_e = 1)=\emptyset$.
\end{itemize}
Note that the denominator in the second product in \eqref{eq: recursion} compensates for the double counting of vertices in $V(\Ybf|_e = 1)$.

In this way, we have just obtained a dynamical system $F=F_{(H,\Sigma, \Upsilon, \Phi)}: \C^{(2^k)}\to \C^{(2^k)}$ given by
\[(\xbf)_n, {(\xbf)\in {\{0,1\}^k}} \mapsto  (\xbf)_{n+1}, {(\xbf)\in {\{0,1\}^k}} \]
where each of the $2^k$ variables $(\xbf)_{n+1}$ is given by a homogeneous polynomial of degree $m$ in terms of the $2^k$ variables $(\xbf)_n$; for each ${(\xbf)\in {\{0,1\}^k}}$, the respective polynomial is provided by \eqref{eq: recursion}.

To illustrate the derived recursive formula, we give below explicit expressions for $Z_e(\lambda, \Ybf, \xbf)$ for two canonical choices of the connecting graphs $\Sigma_e$.

\begin{example}
Let us assume that each connecting graph $\Sigma_e$, $e=(w_{i_1},\dots, w_{i_s})\in E(H)$, is a singleton, so that the vertices labeled $\ell=\ell(e)$ in the respective $\# e = s$ copies $G_n(i_1),\dots, G_n(i_s)$ of the graph $G_n$ are identified together to form a vertex $v$ of the graph $G_{n+1}$.
Then $Z_e(\lambda, \Ybf, \xbf)$ equals $\lambda$ in the following two cases (depending on whether the vertex $v$ is marked or not):
\begin{itemize}
    \item if $e\neq \Phi(j)$ for any $j\in\{1,\dotsm,k\}$ and $y_{i_1}(\ell)= \ldots = y_{i_s}(\ell) = 1$;
    \item if $e=\Phi(j)$ for some $j\in\{1,\dotsm,k\}$ and $x_j=y_{i_1}(\ell)= \ldots = y_{i_s}(\ell) = 1$.
\end{itemize}
In all other cases, $Z_e(\lambda, \Ybf, \xbf)$ equals $0$. %Furthermore, we can simply put $\|\Ybf\|_1 = s - 1= \# e - 1$ in this setting.
\end{example}

\begin{example} We may similarly analyze the situation when each connecting graph $\Sigma_e$, $e=(w_{i_1},\dots, w_{i_s})\in E(H)$, is an $s$-pod with the root $v_e$ specified to be its ``center''. In this setting, the value $Z_e(\lambda, \Ybf, \xbf)$ is determined as follows:
\begin{itemize}
    \item if $e\neq \Phi(j)$ for any $j\in\{1,\dotsm,k\}$ and $V(\Ybf|_e=1)\neq \emptyset$, then $Z_e(\lambda, \Ybf, \xbf) = \lambda^{\|\Ybf|_e\|}$;
    \item if $e\neq \Phi(j)$ for any $j\in\{1,\dotsm,k\}$ and $V(\Ybf|_e=1)= \emptyset$, then $Z_e(\lambda, \Ybf, \xbf) = 1+\lambda$;
    \item if $e=\Phi(j)$ for some $j\in\{1,\dots,k\}$ and $x_j=0$, then $Z_e(\lambda, \Ybf, \xbf) = \lambda^{\|\Ybf|_e\|}$;
    \item if $e=\Phi(j)$ for some $j\in\{1,\dots,k\}$ and $x_j=1$, then $Z_e(\lambda, \Ybf, \xbf) = \lambda$ when $y_{i_1}(\ell)= \ldots = y_{i_s}(\ell) = 0$ and $Z_e(\lambda, \Ybf, \xbf) = 0$ in all other cases.
\end{itemize}
\end{example}

\subsection{Invariant manifold}\label{subsec: inv manifold}
In this and subsequent sections, we will use the notation $({\obf}) = (0, \ldots, 0)$ for the zero vector and $(\ebf_j) = (0, \ldots, 0, 1, 0, \ldots, 0)$ for the $j$-th unit vector in $\mathbb C^k$, respectively.

Our first goal is to prove the following statement.

\begin{prop}\label{prop: invariant}
Suppose the equalities
\begin{equation}\label{eq: invariant manifold}
\frac{(x_1, \ldots, x_{j-1}, 1, x_{j+1}, \ldots, x_k)_N}{(x_1, \ldots, x_{j-1}, 0, x_{j+1}, \ldots, x_k)_N} = \frac{(\ebf_j)_N}{(\obf)_N}, \quad j=1,\dots k,
\end{equation}
hold for a fixed $N=n$ and all tuples
$$
(x_1, \ldots, x_{j-1}, x_{j+1}, \ldots, x_k) \in \{0,1\}^{k-1}.
$$
Then the same is also true for $N=n+1$. In particular, the manifold in $\C^{(2^k)}$ determined by equations \eqref{eq: invariant manifold} is invariant under the dynamical system induced by the recursion \eqref{eq: recursion}.
\end{prop}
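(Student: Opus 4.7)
My plan is to verify \eqref{eq: invariant manifold} at step $n+1$ by an explicit computation that exploits the recursive formula \eqref{eq: recursion} together with two combinatorial features of the gluing data: the edges of $H$ labeled $j$ partition $V(H)$ for each $j$, and the labeling map $\Phi$ is injective.

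The first step is to recast the induction hypothesis in multiplicative form. Setting $\alpha_j := (\ebf_j)_n / (\obf)_n$, the hypothesis at $N = n$ is equivalent to
\[
(\ybf)_n = (\obf)_n \cdot \prod_{j=1}^k \alpha_j^{y_j} \quad \text{for all } \ybf \in \{0,1\}^k.
\]
Substituting this into \eqref{eq: recursion} and using the partition property to rewrite $\prod_{i,j}\alpha_j^{y_j(i)}$ as $\prod_{e \in E(H)} \alpha_{\ell(e)}^{\|\Ybf|_e\|}$, the recursion becomes a sum over $\Ybf \in \{0,1\}^{mk}$ of a product indexed by $e \in E(H)$, with the constant factor $(\obf)_n^m$ pulled outside.

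The crucial next observation is that the restrictions $\Ybf|_e$ for distinct edges $e$ involve pairwise disjoint coordinates of $\Ybf$: each coordinate $y_j(i)$ belongs to exactly one edge factor, namely the unique edge with label $j$ containing $w_i$, again by the partition property. Hence the sum over $\Ybf$ factors as a product over edges, giving
\[
(\xbf)_{n+1} = (\obf)_n^m \prod_{e \in E(H)} B_e(\xbf|_e),
\]
where each factor $B_e$ depends on $\xbf$ only through $\xbf|_e$.

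To conclude, I will use that $\xbf|_e$ is empty unless $e = \Phi(j)$ for some $j$, and that by injectivity of $\Phi$ each coordinate $x_j$ enters exactly one factor $B_{\Phi(j)}$. This forces a complete multiplicative separation
\[
(\xbf)_{n+1} = C \cdot \prod_{j=1}^k A_j(x_j)
\]
for some constant $C$ independent of $\xbf$ and single-variable functions $A_j : \{0,1\} \to \C$. The ratio obtained by flipping $x_j$ from $0$ to $1$ is then $A_j(1)/A_j(0)$, independent of the other coordinates; evaluating at $\xbf = \ebf_j$ and $\xbf = \obf$ identifies this ratio with $(\ebf_j)_{n+1}/(\obf)_{n+1}$, which is exactly \eqref{eq: invariant manifold} at step $n+1$. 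The main technical content lies in the factorization of the sum over $\Ybf$, driven entirely by the partition and injectivity properties of the reduced gluing data $(H, \Phi)$; the remaining steps are routine bookkeeping.
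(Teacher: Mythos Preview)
Your proof is correct and rests on the same two ingredients as the paper's argument: the partition property of the edge labeling (so that the coordinates of $\Ybf$ split disjointly among the edges) and the injectivity of $\Phi$ (so that each $x_j$ affects exactly one edge factor). The only difference is organizational: you invoke the full multiplicative form $(\ybf)_n = (\obf)_n \prod_j \alpha_j^{y_j}$ of the hypothesis and factor the sum over $\Ybf$ as a product over \emph{all} edges of $H$ at once, obtaining the global decomposition $(\xbf)_{n+1} = C\prod_j A_j(x_j)$; the paper instead fixes one $j$, peels off only the single edge $e=\Phi(j)$ from the product, and computes the ratio $(\xbf_{1\to j})_{n+1}/(\xbf_{0\to j})_{n+1}$ directly. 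Your route is a bit more transparent and immediately yields the same explicit ratio formula the paper records as~\eqref{eq: ratio dynamics}, while the paper's local version has the minor advantage of using the induction hypothesis only for the single label $\ell(\Phi(j))$ rather than for all labels simultaneously.
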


\begin{remark}
    For a fixed $N$, each of the  denominators in equations \eqref{eq: invariant manifold} may be equal to $0$ only for finitely many parameters $\lambda\in \C$. Hence, equation it is well-defined to consider these equations as equalities between rational functions in $\lambda$.
\end{remark}

\begin{proof}
    Given any assignment $(\xbf):=(x_1, \ldots x_k)$ and label $j\in \{1,\dots, k\}$, we denote by $(\xbf_{1\to j})$ and $(\xbf_{0\to j})$ the assignments obtained from $(\xbf)$ by replacing the $j$-th entry with $0$ and $1$, respectively. We will also use analogous notations for the assignments $\ybf(1), \dots, \ybf(m)$ to the marked vertices of the copies of $G_n$.

    Suppose the equations
    $$\frac{(\xbf_{1\to j})_N}{(\xbf_{0\to j})_N}=\frac{(\ebf_j)_N}{(\obf)_N}$$ are true for some $N=n$ and all $(\xbf)\in \{0,1\}^k$.

  Fix an arbitrary assignment $(\xbf) \in \{0,1\}^k$ and $j\in \{1,\dots, k\}$, and suppose that $e=\Phi(j) \in E(H)$ and $\ell=\ell(e)\in\{1,\dots,k\}$. Let us also set $I(e)=\{i:\, w_i \in e\}$. Given any $\Ybf=\left(\ybf(1),\dots,\ybf(m) \right)\in \{0,1\}^{mk}$, we can decompose each summand in \eqref{eq: recursion} as follows:
    \begin{align*}
    &\left(\prod_{i=1}^m (\ybf(i))_n\right) \cdot \left( \prod_{e' \in E(H)} \frac{Z_{\Sigma_{e'}}(\lambda, \Ybf|_{e'}, \xbf|_{e'})}{\lambda^{\|\Ybf|_{e'}\|}}\right)\\
    =&\left(\prod_{i \in I(e)} (\ybf(i))_n\right) \cdot \frac{Z_{\Sigma_{e}}(\lambda, \Ybf|_{e}, \xbf|_{e})}{\lambda^{\|\Ybf|_{e}\|}} \cdot \left(\prod_{i \notin I(e)} (\ybf(i))_n\right) \cdot \left( \prod_{e' \in E(H)\setminus\{e\}} \frac{Z_{\Sigma_{e'}}(\lambda, \Ybf|_{e'}, \xbf|_{e'})}{\lambda^{\|\Ybf|_{e'}\|}}\right).
    \end{align*}

    By the assumption,
        \[(\ybf_{1\to \ell}(i))_n=(\ybf_{0\to \ell}(i))_n \frac{(\ebf_\ell)_n}{(\obf)_n}\]
    for all $i \in I(e)$. Hence
    $$\prod_{i \in I(e)} (\ybf(i))_n = \left(\frac{(\ebf_\ell)_n}{(\obf)_n}\right)^{\|\Ybf|_e\|} \cdot  \prod_{i \in I(e)} (\ybf_{0\to \ell}(i))_n,$$
    and we may rewrite every summand in \eqref{eq: recursion} in the following way:

    \begin{align*}
    &\left(\frac{(\ebf_\ell)_n}{(\obf)_n}\right)^{\|\Ybf|_e\|} \cdot
    \frac{Z_{\Sigma_{e}}(\lambda, \Ybf|_{e}, \xbf|_{e})}{\lambda^{\|\Ybf|_{e}\|}} \cdot \left(\prod_{i \in I(e)} (\ybf_{0\to \ell}(i))_n \right) \cdot \left(\prod_{i \notin I(e)} (\ybf(i))_n\right) \cdot \left( \prod_{e' \in E(H)\setminus\{e\}} \frac{Z_{\Sigma_{e'}}(\lambda, \Ybf|_{e'}, \xbf|_{e'})}{\lambda^{\|\Ybf|_{e'}\|}}\right).
    \end{align*}

    \smallskip

    Now we subdivide the sum in \eqref{eq: recursion} over all assignments $\Ybf$ by first summing over all possible assignments $\Ybf'$ on $\bigl\{v_\ell^n(i):\, i \notin I(e)\bigl\} \cup \bigl\{v_j^n(i):\, i\in \{1,\dots,m\}, j\neq \ell\bigl\}$, and afterward summing over all possible assignments $\Ybf_e$ on $\bigl\{v_\ell^n(i):\,i \in I(e)\bigl\}$. Using the notation $\Ybf=\left(\ybf(1),\dots,\ybf(m) \right)$ for the assignment on all the marked vertices of $G_n(1),\ldots, G_n(m)$ induced by $\Ybf'$ and $\Ybf_e$, we then have:

\begin{align*}
    (\xbf)_{n+1} &= \sum_{\Ybf_e}\sum_{\Ybf'}  \left(\prod_{i=1}^m (\ybf(i))_n\right) \cdot \left( \prod_{e' \in E(H)} \frac{Z_{\Sigma_{e'}}(\lambda, \Ybf|_{e'}, \xbf|_{e'})}{\lambda^{\|\Ybf|_{e'}\|}}\right)\\
    &=\left(\sum_{\Ybf_e}\left(\frac{(\ebf_\ell)_n}{(\obf)_n}\right)^{\|\Ybf_e\|} \cdot   \frac{Z_{\Sigma_{e}}(\lambda, \Ybf_{e}, \xbf|_{e})}{\lambda^{\|\Ybf_{e}\|}}\right)\times   \\
    &\times \left( \sum_{\Ybf'}\biggl(\prod_{i \notin I(e)} (\ybf(i))_n\biggl) \cdot \biggl( \prod_{e' \in E(H)\setminus\{e\}} \frac{Z_{\Sigma_{e'}}(\lambda, \Ybf|_{e'}, \xbf|_{e'})}{\lambda^{\|\Ybf|_{e'}\|}}\biggl) \cdot \biggl(\prod_{i \in I(e)} (\ybf_{0\to \ell}(i))_n \biggl) \right).
\end{align*}

It follows that the ratio $\displaystyle \frac{(\xbf_{1\to j})_{n+1}}{(\xbf_{0\to j})_{n+1}}$ is given by

\begin{align}\label{eq: ratio dynamics}
    \frac{(\xbf_{1\to j})_{n+1}}{(\xbf_{0\to j})_{n+1}} &=
    \left(\sum_{\Ybf_e}\left(\frac{(\ebf_\ell)_n}{(\obf)_n}\right)^{\|\Ybf_e\|} \cdot   \frac{Z_{\Sigma_{e}}(\lambda, \Ybf_{e}, \xbf_{1\to j}|_{e})}{\lambda^{\|\Ybf_{e}\|}}\right)\times   \\   \nonumber
    &\times\left(\sum_{\Ybf_e}\left(\frac{(\ebf_\ell)_n}{(\obf)_n}\right)^{\|\Ybf_e\|} \cdot   \frac{Z_{\Sigma_{e}}(\lambda, \Ybf_{e}, \xbf_{0\to j}|_{e})}{\lambda^{\|\Ybf_{e}\|}}\right)^{-1}.
\end{align}
In particular, this ratio does not depend on the values of $x_1, \ldots, x_{j-1},x_{j+1},\ldots, x_{k}$, which establishes the desired identity for $N=n+1$.
\end{proof}

\medskip

We can rewrite the collection of equations~\eqref{eq: invariant manifold} to obtain a $(k+1)$-dimensional manifold in $\C^{(2^k)}$ defined by the following equations:
\begin{equation}\label{eq: invariant manifold products}
\frac{(\xbf)_n}{ (\obf)_n} = \prod_{j:\, x_j = 1} \frac{(\ebf_j)_n}{(\obf)_n}, \quad \quad (\xbf)=(x_1,\dots,x_k)\in \{0,1\}^k.
\end{equation}
%Here, the $\star$-notation refers to vector multiplication, i.e.
%$$
%0 \star (\ebf_j)_n = (\obf)_n \quad
%\text{and} \quad
%1 \star (\ebf_j)_n = (\ebf_j)_n.
%$$
Since the recursion \eqref{eq: recursion} from $\C^{(2^k)}$ to itself is given by a homogeneous polynomial map $F=F_{(H,\Sigma, \Upsilon, \Phi)}$, we can consider it as a rational self-map
$$\widehat F= \widehat F_{(H,\Sigma, \Upsilon, \Phi)}: \P^{(2^k-1)}\to \P^{(2^k-1)}$$
of $\P^{(2^k-1)}$. We recall that this rational map will likely have some indeterminacy points -- points that are mapped to $(0, \ldots, 0) \in \C^{(2^k)}$.

Now let us consider the coordinate chart in $\P^{(2^k-1)}$ given by $(\obf) \neq 0$, a chart that is equivalent to $\mathbb C^{(2^k-1)}$. Points in this coordinate chart can be represented by the variables
$$
[\xbf]_n=[x_1, \ldots, x_k]_n := \frac{(\xbf)_n}{ (\obf)_n}=\frac{(x_1, \ldots, x_k)_n}{ (0, \ldots, 0)_n}.
$$
In these coordinates, the invariant manifold \eqref{eq: invariant manifold products} is given by the following equations:
\begin{equation}\label{eq: invariant projective manifold}
[\xbf]_n=[x_1, \ldots, x_k]_n = \prod_{j:\, x_j=1} [{\bf e_j}]_n, \quad \quad (\xbf)=(x_1,\dots,x_k)\in \{0,1\}^k \setminus (\obf).
\end{equation}
%where we set
%$$
%0 * [\ebf_j]_n = 1 \quad
%\text{and} \quad
%1 * [\ebf_j]_n = [\ebf_j]_n.
%$$
Equations \eqref{eq: invariant projective manifold} define a $k$-dimensional invariant manifold $\MM_{\ebf}$, but now in $\P^{(2^k-1)}$. We note that this manifold is a graph over the $k$-dimensional subspace spanned by the variables $\{[{\bf e_j}]_n\}_{j=1, \ldots, k}$. Moreover, it is independent of the gluing data $(H,\Sigma,\Upsilon, \Phi)$.

We can now use \eqref{eq: ratio dynamics} to describe the induced dynamics of $\widehat F$ on the invariant manifold $\MM_\ebf$ in the chosen coordinate chart in $\P^{2^k-1}$. Namely, we get the following formulas for $\{[{\bf e_j}]_{n+1}\}_{j=1, \ldots, k}$:
 \begin{equation}\label{eq: basis vectors dynamics projective}
     [\ebf_j]_{n+1} = \frac{{\displaystyle \sum_{\Ybf_e}[\ebf_\ell]_n^{\|\Ybf_e\|} \cdot   \frac{Z_{\Sigma_{e}}(\lambda, \Ybf_{e}, \ebf_j|_{e})}{\lambda^{\|\Ybf_{e}\|}}}}
     {\displaystyle {\sum_{\Ybf_e}[\ebf_\ell]_n^{\|\Ybf_e\|} \cdot   \frac{Z_{\Sigma_{e}}(\lambda, \Ybf_{e}, \obf|_{e})}{\lambda^{\|\Ybf_{e}\|}}}}.
 \end{equation}
Here, $e= \Phi(j)$, $\ell=\Lambda(j)=\ell(\Phi(j))$, and the sums are taken over all assignments $\Ybf_e$ to the marked vertices with label $\ell$ of those copies of $G_n$ that are joined by the graph $\Sigma_e$.

\begin{example}\label{ex: basis vectors dynamics projective singletons}
    Suppose for some $j\in \{1,\dots, k\}$ the connecting graph $\Sigma_{\Phi(j)}$ is a singleton. Then \eqref{eq: basis vectors dynamics projective} simply gives:
 \begin{equation}\label{eq: basis vectors dynamics projective singletons}
     [\ebf_j]_{n+1} = \frac{1}{\lambda^{\#\Phi(j)-1}}[\ebf_{\Lambda(j)}]_n^{\# \Phi(j)} .
 \end{equation}
 In particular, if $\#\Phi(j)=1$, then $[\ebf_j]_{n+1}=[\ebf_{\Lambda(j)}]_{n}$.
\end{example}

    Equation \eqref{eq: basis vectors dynamics projective} implies that the image $\widehat F(\MM_\ebf)$ of the invariant manifold lies within a submanifold of $\MM_\ebf$ of dimension $\#\Lambda(\{1,\dots,k\})$. More precisely, this submanifold is a graph over the subspace spanned by the variables $\bigl\{[\ebf_\ell]_n: \ell\in \Lambda(\{1,\dots,k\})\bigl\}$. Hence, for all sufficiently large iterates $n$, the image $\widehat F^n(\MM_\ebf)$ lies within a $k_0$-dimensional invariant submanifold $\MM_0$ of $\MM_\ebf$, where $k_0$ is the number of periodic labels $\ell\in \{1,\dots, k\}$. Combining this with the discussion in Example~\ref{ex: basis vectors dynamics projective singletons}, we get the following corollary.

\begin{corollary}\label{cor: periodic submanfold}
    Suppose the gluing data $(H,\Sigma, \Upsilon, \Phi)$ is stable. Then, in a finite number of iterations, the dynamics of $\widehat F$ projects the invariant manifold $\MM_\ebf$, defined by \eqref{eq: invariant projective manifold}, onto a submanifold $\MM_0 \subset \MM_\ebf$, which is a graph over a subspace spanned by the subcollection of the variables $[{\ebf_\ell}]_n$ corresponding to the periodic labels $\ell\in \{1,\ldots,k\}$. The dynamics of $\widehat F$ acts on $\MM_0$ by permuting these variables, and is therefore periodic. %We then refer to the smaller submanifold as the periodic submanifold.
\end{corollary}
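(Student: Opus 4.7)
The plan is to combine two observations already developed in the paragraph preceding the corollary. Formula \eqref{eq: basis vectors dynamics projective} expresses each $[\ebf_j]_{n+1}$ as a rational function of $\{[\ebf_\ell]_n:\ell\in \Lambda(\{1,\dots,k\})\}$, so iterating we see that $\widehat F^n(\MM_\ebf)$ is contained in the subset of $\MM_\ebf$ parametrized by $\{[\ebf_\ell]:\ell\in \Lambda^n(\{1,\dots,k\})\}$. Since $\{1,\dots,k\}$ is finite, the nested chain $\Lambda^n(\{1,\dots,k\})$ stabilizes after at most $k$ steps at the set $P$ of periodic labels; let $k_0=\#P$. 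Therefore for all sufficiently large $n$ the image $\widehat F^n(\MM_\ebf)$ lies inside a submanifold $\MM_0\subset \MM_\ebf$ on which every non-periodic coordinate $[\ebf_j]$, $j\notin P$, is a rational function of $\{[\ebf_\ell]:\ell\in P\}$; combined with \eqref{eq: invariant projective manifold}, this exhibits $\MM_0$ as a graph over the $k_0$-dimensional subspace spanned by the periodic coordinates.

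To describe the dynamics on $\MM_0$, I would invoke the stability hypothesis. For each periodic label $j\in P$, stability requires the connecting graph $\Sigma_{\Phi(j)}$ to be a singleton, so Example~\ref{ex: basis vectors dynamics projective singletons} applies; moreover non-degeneracy (built into stability) forbids $j$ from being critical, giving $\#\Phi(j)=1$. Substituting these into \eqref{eq: basis vectors dynamics projective singletons} reduces the dynamics on the periodic coordinates to
\[
[\ebf_j]_{n+1} = [\ebf_{\Lambda(j)}]_n,\qquad j\in P.
\]
Since $\Lambda|_P$ is a bijection of $P$ (every periodic point lies on a cycle of $\Lambda$, on which the map acts as a cyclic permutation), this shows simultaneously that $\widehat F$ preserves $\MM_0$ and that the induced action on the periodic coordinates is a genuine permutation, hence periodic.

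The only real technicality is ensuring the graph structure of $\MM_0$ is well defined, i.e.\ that iterating \eqref{eq: basis vectors dynamics projective} indeed produces a bona fide rational parametrization of the non-periodic coordinates in terms of the periodic ones. This is not a serious obstacle: since every label is eventually mapped into $P$ by $\Lambda$, successive substitutions express $[\ebf_j]_n$ for $j\notin P$ as a well-defined rational function of $\{[\ebf_\ell]_{n-1}:\ell\in \Lambda(j)\}$, and a finite chain of such substitutions brings us to variables indexed by $P$; the product formula \eqref{eq: invariant projective manifold} then extends this uniquely to all coordinates $[\xbf]_n$, completing the description of $\MM_0$.
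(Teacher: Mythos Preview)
Your proposal is correct and follows essentially the same approach as the paper: the corollary is stated there as an immediate consequence of the discussion preceding it, namely that \eqref{eq: basis vectors dynamics projective} makes each $[\ebf_j]_{n+1}$ a function of the single variable $[\ebf_{\Lambda(j)}]_n$, so the nested images stabilize over the periodic labels, and then Example~\ref{ex: basis vectors dynamics projective singletons} together with stability (singleton $\Sigma_{\Phi(j)}$ and $\#\Phi(j)=1$ for periodic $j$) reduces the action on $\MM_0$ to the permutation $[\ebf_j]_{n+1}=[\ebf_{\Lambda(j)}]_n$. Your write-up simply makes these steps explicit; the only minor slip is notational (in the last paragraph $\Lambda(j)$ is a single label, not a set).
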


We will refer to the submanifold $\MM_0$ as the \emph{periodic submanifold} of $\widehat F$. When the period is $1$ we will call $\MM_0$ the \emph{fixed submanifold}.

% \begin{equation}\label{eq: ration dynamics projective}
%     [\xbf_{1\to j}]_{n+1} = \frac{{\displaystyle \sum_{\Ybf_e}[\ebf_\ell]_n^{\|\Ybf_e\|} \cdot   \frac{Z_{\Sigma_{e}}(\lambda, \Ybf_{e}, \xbf_{1\to j}|_{e})}{\lambda^{\|\Ybf_{e}\|}}}}
%     {\displaystyle {\sum_{\Ybf_e}[\ebf_\ell]_n^{\|\Ybf_e\|} \cdot   \frac{Z_{\Sigma_{e}}(\lambda, \Ybf_{e}, \xbf_{0\to j}|_{e})}{\lambda^{\|\Ybf_{e}\|}}}}  \,\, [\xbf_{0\to j}]_{n+1}.
% \end{equation}

%\subsection{A bigger invariant submanifold} {\red Say that there is an even bigger invariant manifold in the general case.}

\section{The fixed submanifold}

In this section we will prove that the periodic submanifold is superattracting, under the assumption that the gluing data $(H,\Sigma,\Upsilon, \Phi)$, with parameters $m,k$, is stable and expanding. Then by replacing the graph recursion $\RR_{(H,\Sigma,\Upsilon, \Phi)}$ with its sufficiently high iterate and redefining accordingly the initial gluing data $(H,\Sigma,\Upsilon, \Phi)$, we can make the following assumptions (up to renaming the labels):
\begin{enumerate}[label=(FM\arabic*)]
    \item\label{assump_1} There exists a (unique) $k_0\in\{1,\dots, k\}$ such that $\Lambda(j) = j$ for $j \le k_0$, and such that $\Lambda(j) \in \{1, \ldots, k_0\}$ for all $j > k_0$.
    \item\label{assump_2} All edges $\Phi(j)$ with $j\in \{1,\dots, k_0\}$ consist of a single vertex in $H$.
    \item\label{assump_3} The connecting graphs $\Sigma_{\Phi(j)}$ with $j\in \{1,\dots, k_0\}$ are all singletons.
    \item\label{assump_4}
    For any two distinct labels $j, \ell \in \{1, \ldots, k\}$, the corresponding edges $\Phi(j)$ and $\Phi(\ell)$ in $H$ are disjoint as subsets of $V(H)$, that is, the marked vertices joined by the graphs $\Sigma_{\Phi(j)}$ and $\Sigma_{\Phi(\ell)}$ lie in two disjoint collections of copies of the current graph (in each step of the recursive procedure).
    %For any two distinct labels $j, \ell \in \{1, \ldots, k\}$, there is no edge $e\in E(H)$ with $e\cap \Phi(j) \neq \emptyset$ and $e\cap \Phi(\ell) \neq \emptyset$. In particular, the edges $\Phi(j)$ and $\Phi(\ell)$ are disjoint as subsets of $V(H)$, that is, the marked vertices joined by the graphs $\Sigma_{\Phi(j)}$ and $\Sigma_{\Phi(\ell)}$ lie in two disjoint collections of copies of the current graph (in each step of the recursive procedure).
\end{enumerate}

Let $\widehat F= \widehat F_{(H,\Sigma, \Upsilon, \Phi)}$ be the rational self-map of $\P^{(2^k-1)}$ induced by the recursion \eqref{eq: recursion}. Corollary~\ref{cor: periodic submanfold} implies that, under the assumptions above,  the invariant manifold $\MM_\ebf$ is $k$-dimensional, and it is projected (by the first iterate of $\widehat F$) onto a $k_0$-dimensional submanifold $\MM_0$, which is a graph over the variables $
\{[\ebf_j]_{n}\}_{j=1,\dots, k_0}$. Moreover, $\widehat F$ fixes $\MM_0$ pointwise, hence we refer to $\MM_0$ as the \emph{fixed submanifold} (of the gluing data $(H,\Sigma, \Upsilon, \Phi)$). Our goal in this section is to show the following result for $\MM_0$.

\begin{theorem}\label{thm: normally attracting}
    Suppose the gluing data is stable and expanding. Then the fixed submanifold $\MM_0 \subset \P^{(2^k-1)}$ is normally superattracting for some iterate $\widehat F^p$, $p\in \N$.  More precisely, the following three statements hold for the Jacobian matrix $J=\Jac_{\widehat F^p} (\xi)$ at any given point $\xi\in \MM_0$ of the fixed submanifold:
\begin{enumerate}[label=(\roman*)]
    \item\label{item: Jacobian i} the spectrum of $J$ equals $\{0,1\}$;
    \item\label{item: Jacobian ii} the (generalized) eigenspace associated to the eigenvalue $1$ is $k_0$-dimensional, and it is spanned by the tangent vectors to $\MM_0$ at $\xi$;
    \item\label{item: Jacobian iii} the dimension of the kernel of $J$ is $2^k-1-k_0$.
\end{enumerate}
\end{theorem}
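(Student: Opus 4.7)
The plan is to work in coordinates adapted to the nested pair $\MM_0 \subset \MM_\ebf$, use the invariance of $\MM_\ebf$ (Proposition~\ref{prop: invariant}) to put the Jacobian in block upper-triangular form, and then analyze the tangential and transverse blocks separately. Concretely, in the affine chart $\{(\obf) \neq 0\}$ of $\P^{(2^k-1)}$ I would use the $k$ coordinates $[\ebf_j]$ tangent to $\MM_\ebf$ together with the $(2^k-1-k)$ coordinates $u_\xbf := [\xbf] - \prod_{j: x_j=1}[\ebf_j]$ ($\|\xbf\|\geq 2$) transverse to it, so that $\MM_\ebf = \{u = 0\}$. Invariance then gives at every $\xi \in \MM_0$ a block decomposition
\[
J := \Jac_{\widehat F}(\xi) = \begin{pmatrix} J_{\ebf\ebf} & Q \\ 0 & J_{uu} \end{pmatrix},
\]
and using \eqref{eq: basis vectors dynamics projective} together with \ref{assump_1}--\ref{assump_3} I can compute $J_{\ebf\ebf}$ explicitly: for $j \leq k_0$, Example~\ref{ex: basis vectors dynamics projective singletons} gives $[\ebf_j]_{n+1}=[\ebf_j]_n$ (using $\#\Phi(j)=1$, $\Sigma_{\Phi(j)}$ a singleton, and $\Lambda(j)=j$), while for $j>k_0$ formula \eqref{eq: basis vectors dynamics projective} presents $[\ebf_j]_{n+1}$ as a rational function $g_j$ of $[\ebf_{\Lambda(j)}]_n$ alone, with $\Lambda(j)\leq k_0$. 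Ordering the first $k_0$ coordinates first,
\[
J_{\ebf\ebf} = \begin{pmatrix} I_{k_0} & 0 \\ A & 0 \end{pmatrix}
\]
is a rank-$k_0$ idempotent whose $1$-eigenspace is precisely $T_\xi \MM_0$ (the graph of $g$).

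The main obstacle is to prove that $J_{uu}(\xi)$ is nilpotent---ideally zero. Here the expansiveness assumption \ref{assump_4} is crucial: the disjointness of the subsets $\Phi(j)\subset V(H)$ yields the factorization
\[
\prod_{e\in E(H)}\frac{Z_{\Sigma_e}(\lambda,\Ybf|_e,\xbf|_e)}{\lambda^{\|\Ybf|_e\|}} = \Xi^0(\Ybf)\prod_{j=1}^k \Xi_j(\Ybf|_{\Phi(j)},x_j),
\]
in which the $k$ factors $\Xi_j$ depend on mutually disjoint pieces of $\Ybf$ and each on the single external variable $x_j$. Linearizing \eqref{eq: recursion} in a direction $u_{\xbf^*}$ at $\xi\in\MM_0$ introduces the combinatorial weight $t_{\xbf^*}(\Ybf):=\#\{i:\ybf(i)=\xbf^*\}$ into the sum over $\Ybf$. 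I would decompose $t_{\xbf^*} = \sum_{i^*} \mathbf{1}[\ybf(i^*)=\xbf^*]$ and, for each distinguished copy $i^*$, split the $\Ybf$-sum according to the disjoint sets $\Phi(j)$; the disjointness from \ref{assump_4} then replicates the cancellation argument from the proof of Proposition~\ref{prop: invariant}, forcing the first-order projective variation of $[\xbf]_{n+1}$ to factorize as $\prod_{j:x_j=1}[\ebf_j]$ times the variation of $[\obf]_{n+1}$. Subtracting the analogous variation of $\prod_{j: x_j=1}[\ebf_j]_{n+1}$ then yields $\partial u_{\xbf,n+1}/\partial u_{\xbf^*,n}\big|_\xi = 0$, i.e., $J_{uu}(\xi)=0$. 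If this cancellation only delivers nilpotence rather than exact vanishing, the conclusion below still applies to a larger iterate of $\widehat F$.

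To finish, pick $p$ so that $J_{uu}^p=0$. Then
\[
\Jac_{\widehat F^{2p}}(\xi) = \begin{pmatrix} J_{\ebf\ebf} & J_{\ebf\ebf} Q_p \\ 0 & 0 \end{pmatrix}
\]
is a rank-$k_0$ projection (since $J_{\ebf\ebf}^2=J_{\ebf\ebf}$): its image coincides with $T_\xi\MM_0$ and its kernel has dimension $(2^k-1)-k_0$. This immediately gives \ref{item: Jacobian i}, \ref{item: Jacobian ii} and \ref{item: Jacobian iii}.
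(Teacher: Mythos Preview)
Your proposal is correct and follows essentially the same strategy as the paper. The paper packages the key step as Proposition~\ref{prop:attracting invariant manifold} (the defining equations of $\MM_\ebf$ go from $O(\epsilon)$ to $O(\epsilon^2)$ under one iterate of $\widehat F$) and then derives the theorem by a short contradiction argument, whereas you work directly with the block decomposition of $\Jac_{\widehat F}(\xi)$ in $(\,[\ebf_j],\,u_\xbf\,)$-coordinates and show $J_{uu}=0$; but the heart of both arguments---factorizing the $\Ybf$-sum via the disjointness assumption~\ref{assump_4}, isolating a single distinguished copy $i^*$, and using \eqref{eq: basis vectors dynamics projective} to force the cancellation---is identical, and your explicit analysis of $J_{\ebf\ebf}$ as a rank-$k_0$ idempotent is exactly what the paper invokes implicitly when it says ``the invariant manifold is projected onto the fixed manifold''.
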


% \begin{remark}
%     A similar result should be true also in the non-expanding case. However, in this case, one has to consider a bigger invariant manifold $\MM'$ with a periodic submanifold $\MM'_0$. Both of these manifolds can be explicitly described in terms of the gluing data.
% \end{remark}

Let us recall the defining equations for the invariant manifold $\MM_\ebf$:
\begin{equation}\label{eq:defining equations}
[\xbf]_n = \prod_{j:\, x_j = 1} [{\bf e_j}]_n, \quad [\xbf]=[x_1,\dots, x_k]\in \{0,1\}^k\setminus [\obf].
\end{equation}
We also note that there are exactly $2^{k} - 1 - k$ binary $k$-tuples $[\xbf]$ that are unequal to the $0$-tuple $[\obf]$ or to a unit vector tuple $[\bf{e_j}]$, $j=1,\dots,k$.

\begin{prop}\label{prop:attracting invariant manifold}
Suppose the gluing data $(H,\Sigma,\Upsilon, \Phi)$ is expanding and satisfies condition \ref{assump_4}. If at a point $\xi\in \P^{(2^k-1)}$ the defining equations~\eqref{eq:defining equations} are satisfied up to an additive error of order $O(\epsilon)$, then at $\widehat F(\xi)$ the defining equations are satisfied up to order $O(\epsilon^2)$.
\end{prop}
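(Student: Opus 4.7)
The plan is to do a first-order Taylor expansion of the recursion around the invariant manifold $\MM_\ebf$ and show that the linear correction to the defining equations~\eqref{eq:defining equations} vanishes identically. To this end I write the coordinates of $\xi$ as
\[
[\ybf]_n = \prod_{\ell:\, y_\ell = 1}[\ebf_\ell]_n + \Delta_\ybf, \qquad \ybf\in\{0,1\}^k,
\]
so that automatically $\Delta_\obf = \Delta_{\ebf_j} = 0$, while the hypothesis reads $\Delta_\ybf = O(\epsilon)$ for the remaining $\ybf$. Substituting into the projective form of the recursion~\eqref{eq: recursion} and expanding, the numerator $N_\xbf$ of $[\xbf]_{n+1}$ becomes $A_\xbf^{(0)} + A_\xbf^{(1)} + O(\epsilon^2)$, with $A_\xbf^{(0)}$ the on-manifold value and $A_\xbf^{(1)}$ linear in the $\Delta_\ybf$'s. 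By Proposition~\ref{prop: invariant}, the zeroth-order piece already satisfies the defining equations at step $n+1$, so a direct computation of $[\xbf]_{n+1} = N_\xbf/N_\obf$ reduces the desired conclusion to the identity
\[
\frac{A_\xbf^{(1)}}{A_\xbf^{(0)}} - \frac{A_\obf^{(1)}}{A_\obf^{(0)}} = \sum_{j:\, x_j = 1}\left(\frac{A_{\ebf_j}^{(1)}}{A_{\ebf_j}^{(0)}} - \frac{A_\obf^{(1)}}{A_\obf^{(0)}}\right).
\]

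To verify this identity, I would exploit the per-label factorization implicit in Section~2. Since the binary variables $(y_\ell(i))_{i}$ for distinct labels $\ell$ are unconstrained in the recursion sum, on $\MM_\ebf$ one has $A_\xbf^{(0)} = \prod_\ell C_\ell(\xbf|_\ell)$ for some label-$\ell$ transfer factors $C_\ell$, and assumption~\ref{assump_4} (disjointness of the $\Phi(j)$'s in $V(H)$) allows each $C_\ell$ to split further as $E^\ell \prod_{j:\, \Lambda(j) = \ell} D_j(x_j)$. The linear term $A_\xbf^{(1)}$ is itself a sum over pairs $(i_0,\tilde\ybf)$ of $\Delta_{\tilde\ybf}$ times the analogous expression with $\ybf(i_0)=\tilde\ybf$ conditioned in, so dividing by $A_\xbf^{(0)}$ produces a product of conditional ratios $\rho_{\ell,i_0}(\tilde y_\ell;\,\cdot)$.

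The decisive combinatorial input from~\ref{assump_4} is that each vertex $w_{i_0}\in V(H)$ lies in at most one edge $\Phi(j)$. Consequently, for fixed $i_0$ at most one among the factors $\rho_{\ell,i_0}$, namely the one with $\ell=\Lambda(j)$, can carry any $\xbf$-dependence, and when it does, it depends only on the single coordinate $x_j$. This decouples $A_\xbf^{(1)}/A_\xbf^{(0)}$ cleanly: it equals $A_\obf^{(1)}/A_\obf^{(0)}$ plus independent contributions indexed by those $j$ with $x_j = 1$, each matching exactly $A_{\ebf_j}^{(1)}/A_{\ebf_j}^{(0)} - A_\obf^{(1)}/A_\obf^{(0)}$, yielding the identity. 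The main obstacle is the careful bookkeeping of these conditional marginal factors; the geometric content, however, is transparent: disjointness in~\ref{assump_4} is precisely what prevents a single copy from coupling simultaneously to two distinct marked labels, which would otherwise break the first-order decoupling.
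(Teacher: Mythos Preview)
Your approach is correct and essentially the same as the paper's: both linearize the defining equations at step $n+1$ around $\MM_\ebf$, reduce to the identity
\[
\frac{A_\xbf^{(1)}}{A_\xbf^{(0)}} - \frac{A_\obf^{(1)}}{A_\obf^{(0)}} \;=\; \sum_{j:\, x_j = 1}\left(\frac{A_{\ebf_j}^{(1)}}{A_{\ebf_j}^{(0)}} - \frac{A_\obf^{(1)}}{A_\obf^{(0)}}\right),
\]
and verify it by factoring the on-manifold sum over labels, splitting the first-order term copy-by-copy via the product rule, and invoking \ref{assump_4} to conclude that each copy index $i_0$ lies in at most one $\Phi(j)$ so that only a single coordinate $x_{j_0}$ can enter the corresponding conditional ratio. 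The paper phrases the linearization as partial derivatives $\partial/\partial[\ubf]_n$ of $\langle\zbf\rangle_{n+1}$ rather than a Taylor expansion in perturbations $\Delta_\ybf$, and writes out the intermediate expressions \eqref{eq: desired identity}--\eqref{eq: desired identity copy wise on ones} explicitly, but the logical skeleton and the decisive use of \ref{assump_4} coincide with yours.
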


\begin{proof}[Proof of Theorem \ref{thm: normally attracting} given Proposition~\ref{prop:attracting invariant manifold}]
     By passing to an iterate, we may assume that the original gluing data satisfies conditions \ref{assump_1}--\ref{assump_4}. Since $\MM_0$ is fixed by the respective dynamics, the fact that the tangent vectors to $\MM_0$ at $\xi$ are unitary eigenvectors of the Jacobian matrix $J=\Jac_{\widehat F}(\xi)$ is immediate. By passing to even higher iterate, we may assume that the algebraic and geometric multiplicities of the eigenvalue $0$ coincide. So what remains is to determine the dimension of the kernel of $J$.

     Let  $\xi \in \MM_0$, and suppose for the purpose of a contradiction that there exists an eigenvector $v$ of the Jacobian matrix $J(\xi)$ that is not a tangent vector to $\MM_0$ at $\xi$, and with corresponding eigenvalue $\lambda$ unequal to $0$. Then clearly $v$ cannot be a tangent vector to $\MM_\ebf$ either, since the invariant manifold is projected onto the fixed manifold. By identifying the tangent space with $\mathbb C^{2^k-1}$ we obtain
     $$
     \widehat{F}(\xi+ \epsilon \cdot v) = \xi +  \epsilon \lambda\cdot v + O(\epsilon^2).
     $$
     Since $v$ is not a tangent vector to $\MM_\ebf$ and $\lambda \neq 0$, this contradicts Proposition~\ref{prop:attracting invariant manifold}.
\end{proof}

In the proof of Proposition~\ref{prop:attracting invariant manifold}, we will follow the notation used in Section~\ref{subsec: inv manifold}. In particular, we assume that $V(H)=\{w_1,\dots, w_m\}$ and set $I(e)=\{i:\, w_i \in e\}$ for a given $e\in E(H)$. We also set $$E:=\{\Phi(j): \, j=1,\dots, k\} \quad \text{and} \quad  I(E):= \bigcup_{e\in E} I(e).$$
    %to be the set of edges of $H$ that correspond to the new marked points in the recursive step.
    Furthermore, given a non-zero tuple  ${[\xbf]} = [x_1, \ldots, x_k] \in \{0,1\}^k$, we denote by $\chi_\xbf$ the unit vector in $\C^{(2^k-1)}$ whose $[\xbf]_n$-coordinate equals $1$.

\begin{proof}[Proof of Proposition~\ref{prop:attracting invariant manifold}]
    We consider one of the defining equations in~\eqref{eq:defining equations}, and we wish to prove that its partial derivatives with respect to any of the variables $[\ubf]_n$ vanish. We first claim that this is equivalent to the statement that the vector
    $$
    v_{\bf x} = \sum_{j:\, x_j = 1} \frac{1}{[\ebf_{j}]_{n+1}} \cdot \chi_{\ebf_{j}} - \frac{1}{\prod_{j:\, x_j = 1} [ \ebf_{j}]_{n+1}} \cdot \chi_{\bf x}.
    $$
    is a left-eigenvector of the Jacobian matrix $J=J(\xi)$, which is what we will prove.

     To see the equivalence, note that the vector $v_\xbf$ is a left-eigenvector of $J$ if and only if the product of $v_\xbf$ with any column vector of the Jacobian matrix $J$ is zero. A column of the Jacobian matrix $J$ is obtained by taking the derivative of each coordinate function $[\ybf]_{n+1}$ (with $[\ybf] \neq [\obf]$) with respect to a fixed coordinate, which we denote by $[\ubf]_n = [u_1, \ldots, u_k]_n$. We therefore need to show that
    \begin{equation}\label{eq: co-eigenvector idenity}
    \sum_{j:\, x_j = 1} \frac{1}{[\ebf_{j}]_{n+1}}\cdot \frac{\partial}{\partial [\ubf]_n} [\ebf_{j}]_{n+1} - \frac{1}{\prod_{j:\, x_j = 1} [ \ebf_{j}]_{n+1}} \cdot \frac{\partial}{\partial [\ubf]_n} [\xbf]_{n+1} = 0 \quad \quad     \text{for all $[\ubf]\neq [\obf]$},
    \end{equation}
    when evaluated at the point $\xi \in \MM_0$. When we multiply with $\prod_{j:\, x_j = 1} [ \ebf_{j}]_{n+1}$ we obtain that the partial derivative with respect to $[\ubf]_n$ of the defining equation for the variable $[\xbf]_{n+1}$ vanishes, which is indeed what we needed to prove.

    Let us recall that
    \begin{align*}
        [\xbf]_{n+1} = \frac{(\xbf)_{n+1}}{(\obf)_{n+1}}
        &=\frac{\displaystyle \sum_{\Ybf}  \left(\prod_{i=1}^m (\ybf(i))_n\right) \cdot \left( \prod_{e \in E(H)} \frac{Z_e(\lambda, \Ybf, \xbf)}{\lambda^{\|\Ybf|_e\|}}\right)}{\displaystyle \sum_{\Ybf}  \left(\prod_{i=1}^m (\ybf(i))_n\right) \cdot \left( \prod_{e \in E(H)} \frac{Z_e(\lambda, \Ybf, \obf)}{\lambda^{\|\Ybf|_e\|}}\right)} \\
        &=\frac{\displaystyle \sum_{\Ybf}  \left(\prod_{i=1}^m [\ybf(i)]_n\right) \cdot \left( \prod_{e \in E(H)} \frac{Z_e(\lambda, \Ybf, \xbf)}{\lambda^{\|\Ybf|_e\|}}\right)}{\displaystyle \sum_{\Ybf}  \left(\prod_{i=1}^m [\ybf(i)]_n\right) \cdot \left( \prod_{e \in E(H)} \frac{Z_e(\lambda, \Ybf, \obf)}{\lambda^{\|\Ybf|_e\|}}\right)},
   \end{align*}
   where $[\obf]_n$ is set to be $1$. For convenience, we denote the expressions in the enumerator and denominator of the last fraction by $\langle \xbf\rangle_{n+1}$ and $\langle \obf\rangle_{n+1}$, respectively; that is, we have $[\xbf]_{n+1}= \frac{\langle\xbf\rangle_{n+1}}{\langle\obf\rangle_{n+1}}$. Note that then
    $$
    \langle \xbf \rangle_{n+1} = \langle \obf\rangle_{n+1} \cdot [\xbf]_{n+1} = \langle \obf\rangle_{n+1} \cdot \prod_{j:\, x_j = 1} [\ebf_j]_{n+1}
    $$
%    and
%    $$
%    \langle \ebf_j\rangle_{n+1} = \langle\obf\rangle_{n+1} \cdot [\ebf]_{n+1}
%    $$
    since $\xi$ is on the fixed submanifold $\MM_0$. We also have
    $$
    \langle \ebf_{j}\rangle_{n+1} = \langle\obf\rangle_{n+1} \cdot [\ebf_{j}]_{n+1} \quad\quad \text{for each $j=1,\dots, k$}.
    $$

    Now we simply compute the partial derivatives with respect to $[\ubf]_n$ in \eqref{eq: co-eigenvector idenity} using the quotient rule,
    and collect all the items under the common denominator $\langle\obf\rangle_{n+1}^2$. After setting the respective numerator to be equal to zero (and using the two equations  above for $\langle \xbf\rangle_{n+1}$ and $\langle \ebf_{j}\rangle_{n+1}$), we get the following equivalent identity (after dividing by the common factor $\langle \obf\rangle_{n+1}$):
%    \begin{equation}\label{eq: desired equality}
%    \begin{aligned}
%    & \langle\obf\rangle_{n+1} \cdot \left( \sum_{j:\, x_j = 1} \alpha_j \cdot \frac{\partial}{\partial [\ubf]_n} \langle \ebf_{\Lambda(j)}\rangle_{n+1} \right) -
%    \|\xbf\|\cdot  \langle\obf\rangle_{n+1} \cdot \frac{\partial}{\partial[\ubf]_n} \langle\obf\rangle_{n+1} =\\
%    & \langle\obf\rangle_{n+1} \cdot \left(\bigg(\prod_{j:\, x_j = 1} \alpha_j\bigg)\cdot \frac{\partial}{\partial [\ubf]_n} \langle x\rangle_{n+1} \right) -
%    \langle\obf\rangle_{n+1} \cdot \frac{\partial}{\partial [\ubf]_n} \langle\obf\rangle_{n+1}.
%    \end{aligned}
%    \end{equation}
   % \begin{equation}\label{eq: co-eigenvector idenity numerator}
   %  \begin{aligned}
   %  & \langle\obf\rangle_{n+1} \cdot \left( \sum_{j:\, x_j = 1} \frac{1}{[\ebf_{\Lambda(j)}]_{n+1}} \cdot \frac{\partial}{\partial [\ubf]_n} \langle \ebf_{\Lambda(j)}\rangle_{n+1} \right) -
   %  \|\xbf\|\cdot  \langle\obf\rangle_{n+1} \cdot \frac{\partial}{\partial[\ubf]_n} \langle\obf\rangle_{n+1} =\\
   %  & \langle\obf\rangle_{n+1} \cdot \left(\frac{1}{\prod_{j:\, x_j = 1} [\ebf_{\Lambda(j)}]_{n+1}} \cdot \frac{\partial}{\partial [\ubf]_n} \langle\xbf\rangle_{n+1} \right) -
   %  \langle\obf\rangle_{n+1} \cdot \frac{\partial}{\partial[\ubf]_n} \langle\obf\rangle_{n+1},
   %  \end{aligned}
   %  \end{equation}
   %  where $\|\xbf\|=\#\{j:\, x_j = 1\}$ is the $1$-norm of $(\xbf)$. Dividing each term in \eqref{eq: co-eigenvector idenity numerator} by the common factor $\langle \obf\rangle_{n+1}$, we arrive to the following desired identity:
    \begin{equation}\label{eq: desired identity}
    \begin{aligned}
    &
    \sum_{j:\, x_j = 1} \frac{1}{[\ebf_{j}]_{n+1}} \cdot \frac{\partial}{\partial [\ubf]_n} \langle \ebf_{j}\rangle_{n+1} - \|\xbf\|\cdot  \frac{\partial}{\partial[\ubf]_n} \langle\obf\rangle_{n+1}=\\
    &\frac{1}{\prod_{j:\, x_j = 1} [\ebf_{j}]_{n+1}} \cdot \frac{\partial}{\partial [\ubf]_n} \langle\xbf\rangle_{n+1} -
     \frac{\partial}{\partial[\ubf]_n} \langle\obf\rangle_{n+1}.
    \end{aligned}
    \end{equation}

    Thus we need to compare the partial derivatives of $\langle \ebf_{j}\rangle_{n+1}$, $\langle \xbf\rangle_{n+1}$, and $\langle\obf\rangle_{n+1}$ with respect to $[\ubf]_{n}$ (at the point $\xi\in \MM_0$). These have the following form
    \begin{equation}\label{eq: partial deriv as sum}
     \frac{\partial}{\partial[\ubf]_n}\sum_{\Ybf}  \left(\prod_{i=1}^m [\ybf(i)]_n\right) \cdot \left( \prod_{e \in E(H)} \frac{Z_e(\lambda, \Ybf, \zbf)}{\lambda^{\|\Ybf|_e\|}}\right),
    \end{equation}
    where $\zbf$ is either $\ebf_{j}$, $\xbf$, or $\obf$, respectively. We note that the three respective sums run over all possible assignments $\Ybf=\left(\ybf(1),\dots,\ybf(m) \right)$ on the marked vertices of the copies $G_1(n),\dots, G_m(n)$. We subdivide each such sum by first summing over all assignments $\Ybf_E$ on the vertex set $$\big\{v^n_{\Lambda(\ell)}(i):\, i\in I(e), e=\Phi(\ell)\in E\big\},$$
    which correspond to the marked vertices joined by the connecting graphs $\Sigma_e$ with $e=\Phi(\ell)$ for some $\ell\in \{1,\dots, k\}$, and afterward summing over all assignments $\Ybf'$ on the remaining marked vertices in the copies. It suffices to check the desired identity for one assignment $\Ybf'$ at a time. The use of the product rule for the partial derivative $\frac{\partial}{\partial[\ubf]_n}$ of each of the respective summands results in a sum of $m$ terms
    $$ \left( \prod_{e \in E(H)} \frac{Z_e(\lambda, \Ybf, \zbf)}{\lambda^{\|\Ybf|_e\|}}\right)\cdot  \left(\prod_{s \neq i} [\ybf(s)]_n\right) \cdot \frac{\partial}{\partial[\ubf]_n} [\ybf(i)]_n,$$
    one for each $i=1,\dots, m$. Now we observe that the partial derivative $\frac{\partial}{\partial [\ubf]_n} [\ybf(i)]_n$ equals $0$ unless $[\ybf(i)]_n = [\ubf]_n$, in which case the derivative is $1$. Hence, to check \eqref{eq: desired identity} it is enough to show the following identity for each $i\in \{1,\dots,m\}$ and all assignments $\Ybf=\Ybf'\cup \Ybf_E$ that satisfy $[\ybf(i)]_n = [\ubf]_n$:
    \begin{equation*}%\label{eq: desired identity copy wise}
    \begin{aligned}
   &  \sum_{j:\, x_j = 1} \frac{1}{[\ebf_{j}]_{n+1}} \cdot \left(\sum_{\Ybf_E}\bigg(\prod_{e\in E} \frac{Z_e(\lambda, \Ybf, \ebf_{j})}{\lambda^{\|\Ybf|_e\|}}\cdot \prod_{s \neq i, s\in I(E)} [\ybf(s)]_n\bigg) \right) \\
   & - \| \xbf \| \cdot \left(\sum_{\Ybf_E}\bigg(\prod_{e\in E} \frac{Z_e(\lambda, \Ybf, \obf)}{\lambda^{\|\Ybf|_e\|}}\cdot \prod_{s \neq i, s\in I(E)} [\ybf(s)]_n\bigg) \right)  =\\
    &
    \frac{1}{\prod_{j:\, x_j = 1} [\ebf_{j}]_{n+1}} \cdot \left(\sum_{\Ybf_E}\bigg(\prod_{e\in E} \frac{Z_e(\lambda, \Ybf, \xbf)}{\lambda^{\|\Ybf|_e\|}}\cdot \prod_{s \neq i, s\in I(E)} [\ybf(s)]_n\bigg) \right) \\
    &-\left(\sum_{\Ybf_E}\bigg(\prod_{e\in E} \frac{Z_e(\lambda, \Ybf, \obf)}{\lambda^{\|\Ybf|_e\|}}\cdot \prod_{s \neq i, s\in I(E)} [\ybf(s)]_n\bigg) \right) .
    \end{aligned}
    \end{equation*}
   Let us subdivide each sum over $\Ybf_E$ above into $k$ sums, each summing over all assignments $\Ybf_e$ with $e=\Phi(\ell)$, $\ell=1,\dots, k$. By assumption~\ref{assump_4}, we can rewrite the above identity as follows:
   \begin{equation}\label{eq: desired identity copy wise}
       \begin{aligned}
    &  \sum_{j:\, x_j = 1} \frac{1}{[\ebf_{j}]_{n+1}} \cdot \left(\prod_{e\in E}\sum_{\Ybf_e} \bigg(\frac{Z_e(\lambda, \Ybf_e, \ebf_{j})}{\lambda^{\|\Ybf_e\|}}\cdot \prod_{s \neq i, s\in I(e)} [\ybf(s)]_n\bigg) \right) \\
    & - \| \xbf \| \cdot \left(\prod_{e\in E}\sum_{\Ybf_e} \bigg(\frac{Z_e(\lambda, \Ybf_e, \obf)}{\lambda^{\|\Ybf_e\|}}\cdot \prod_{s \neq i, s\in I(e)} [\ybf(s)]_n\bigg) \right)  =\\
     &
     \frac{1}{\prod_{j:\, x_j = 1} [\ebf_{j}]_{n+1}} \cdot \left(\prod_{e\in E}\sum_{\Ybf_e} \bigg(\frac{Z_e(\lambda, \Ybf_e, \xbf)}{\lambda^{\|\Ybf_e\|}}\cdot \prod_{s \neq i, s\in I(e)} [\ybf(s)]_n\bigg) \right) \\
     &-\left(\prod_{e\in E}\sum_{\Ybf_e} \bigg(\frac{Z_e(\lambda, \Ybf_e, \obf)}{\lambda^{\|\Ybf_e\|}}\cdot \prod_{s \neq i, s\in I(e)} [\ybf(s)]_n\bigg) \right).
     \end{aligned}
     \end{equation}

    We now decompose $E$ as the disjoint union $E_0\sqcup E_1$, where
    $$E_0:=\{\Phi(\ell): x_\ell=0\} \quad \text{and} \quad E_1:=\{\Phi(\ell): x_\ell=1\}.$$
    Then each of the products in \eqref{eq: desired identity copy wise} has the following common factor:
    \begin{equation*}
    \prod_{e\in E_0}\sum_{\Ybf_e} \bigg(\frac{Z_e(\lambda, \Ybf_e, \obf)}{\lambda^{\|\Ybf_e\|}}\cdot \prod_{s \neq i, s\in I(e)} [\ybf(s)]_n\bigg).
    \end{equation*}
    After canceling it out, we arrive to the following identity:
    \begin{equation}\label{eq: desired identity copy wise on ones}
       \begin{aligned}
    &  \sum_{j:\, x_j = 1} \frac{1}{[\ebf_{j}]_{n+1}} \cdot \left(\prod_{e\in E_1}\sum_{\Ybf_e} \bigg(\frac{Z_e(\lambda, \Ybf_e, \ebf_{j})}{\lambda^{\|\Ybf_e\|}}\cdot \prod_{s \neq i, s\in I(e)} [\ybf(s)]_n\bigg) \right) \\
    & - \| \xbf \| \cdot \left(\prod_{e\in E_1}\sum_{\Ybf_e} \bigg(\frac{Z_e(\lambda, \Ybf_e, \obf)}{\lambda^{\|\Ybf_e\|}}\cdot \prod_{s \neq i, s\in I(e)} [\ybf(s)]_n\bigg) \right)  =\\
     &
     \frac{1}{\prod_{j:\, x_j = 1} [\ebf_{j}]_{n+1}} \cdot \left(\prod_{e\in E_1}\sum_{\Ybf_e} \bigg(\frac{Z_e(\lambda, \Ybf_e, \xbf)}{\lambda^{\|\Ybf_e\|}}\cdot \prod_{s \neq i, s\in I(e)} [\ybf(s)]_n\bigg) \right) \\
     &-\left(\prod_{e\in E_1}\sum_{\Ybf_e} \bigg(\frac{Z_e(\lambda, \Ybf_e, \obf)}{\lambda^{\|\Ybf_e\|}}\cdot \prod_{s \neq i, s\in I(e)} [\ybf(s)]_n\bigg) \right).
     \end{aligned}
     \end{equation}

 Using \eqref{eq: basis vectors dynamics projective} and the fact that $\xi\in \MM_0$, we have that the following is true for each $j$:
 \begin{equation}\label{eq: no u in the fan}
    \begin{aligned}
        [\ebf_j]_{n+1} &=
        \frac{\displaystyle \sum_{\Ybf_{\Phi(j)}}[\ebf_{\Lambda(j)}]_n^{\|\Ybf_{\Phi(j)}\|} \cdot   \frac{Z_{\Phi(j)}(\lambda, \Ybf_{\Phi(j)}, \ebf_j)}{\lambda^{\|\Ybf_{\Phi(j)}\|}}}
     {\displaystyle {\sum_{\Ybf_{\Phi(j)}}[\ebf_{\Lambda(j)}}]_n^{\|\Ybf_{\Phi(j)}\|} \cdot   \frac{Z_{\Phi(j)}(\lambda, \Ybf_{\Phi(j)}, \obf)}{\lambda^{\|\Ybf_{\Phi(j)}\|}}}\\
        &=\frac{\displaystyle \sum_{\Ybf_{\Phi(j)}}  \bigg(\frac{Z_{\Phi(j)}(\lambda, \Ybf_{\Phi(j)}, \ebf_j)}{\lambda^{\|\Ybf_{\Phi(j)}\|}}\cdot \prod_{s\in I(\Phi(j))} [\ybf(s)]_n\bigg)}{\displaystyle \sum_{\Ybf_{\Phi(j)}}  \bigg(\frac{Z_{\Phi(j)}(\lambda, \Ybf_{\Phi(j)}, \obf)}{\lambda^{\|\Ybf_{\Phi(j)}\|}}\cdot \prod_{s\in I(\Phi(j))} [\ybf(s)]_n\bigg)}.
   \end{aligned}
   \end{equation}
  It follows that when $i\notin I(\Phi(j))$ for $j$ with $x_j=1$, then:
    \begin{equation*}
    \begin{aligned}
& \frac{1}{[\ebf_{j}]_{n+1}} \cdot \left(\prod_{e\in E_1}\sum_{\Ybf_e} \bigg(\frac{Z_e(\lambda, \Ybf_e, \ebf_{j})}{\lambda^{\|\Ybf_e\|}}\cdot \prod_{s \neq i, s\in I(e)} [\ybf(s)]_n\bigg) \right) \\&- \prod_{e\in E_1}\sum_{\Ybf_e} \bigg(\frac{Z_e(\lambda, \Ybf_e, \obf)}{\lambda^{\|\Ybf_e\|}}\cdot \prod_{s \neq i, s\in I(e)} [\ybf(s)]_n\bigg) =0
    \end{aligned}
    \end{equation*}
  and
     \begin{equation*}
    \begin{aligned}
& \frac{1}{[\ebf_{j}]_{n+1}} \cdot \sum_{\Ybf_{\Phi(j)}} \bigg(\frac{Z_{\Phi(j)}(\lambda, \Ybf_{\Phi(j)}, \xbf)}{\lambda^{\|\Ybf_{\Phi(j)}\|}}\cdot \prod_{s \neq i, s\in I(\Phi(j))} [\ybf(s)]_n\bigg)  \\&- \sum_{\Ybf_{\Phi(j)}} \bigg(\frac{Z_{\Phi(j)}(\lambda, \Ybf_{\Phi(j)}, \obf)}{\lambda^{\|\Ybf_{\Phi(j)}\|}}\cdot \prod_{s \neq i, s\in I(\Phi(j))} [\ybf(s)]_n\bigg) =0.
    \end{aligned}
    \end{equation*}
 The desired equality \eqref{eq: desired identity copy wise on ones} now immediately follows since $i\in I(\Phi(j_0))$ for at most one $j_0\in \{1,\dots, k\}$ by assumption~\ref{assump_4}.% This finishes the proof in Case 1. \\
\end{proof}

\section{Boundedness of zeros}

We now finish the proof our main result.

\begin{prop}\label{prop: near infinity}
    Let $(H,\Sigma,\Upsilon, \Phi)$ be a stable and expanding gluing data with parameters $m,k$, let $\RR=\RR_{(H,\Sigma,\Upsilon, \Phi)}$ be the respective graph recursion, and let $\widehat F = \widehat F_{(H,\Sigma,\Upsilon, \Phi)}: \P^{(2^k-1)}\to \P^{(2^k-1)}$ be the dynamical system on projective space induced by the recursion \eqref{eq: recursion}. Suppose also that $G_0$ is a maximally independent starting graph (with $k$ marked vertices labeled $1,\dots, k$), and let $z_0 = z_0(\lambda) \in \P^{(2^k-1)}$ be the induced starting value.

    Given any $\epsilon >0$, there is $C(\epsilon)>0$ such that the following hols:  If $|\lambda|> C(\epsilon)$, then the $\widehat F$-orbit $(z_n)_{n\ge0}$ converges to a periodic orbit on the periodic submanifold $\MM_0$ that is $\epsilon$-close to the point $[0:\ldots : 0: 1] \in \mathbb P^{(2^k-1)}$.
\end{prop}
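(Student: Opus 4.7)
The plan is to combine the asymptotic behavior of $z_0(\lambda)$ as $|\lambda|\to\infty$ with the normal superattracting property of $\MM_0$ from Theorem~\ref{thm: normally attracting}, interpreted in a suitable affine chart of $\P^{(2^k-1)}$ centered at the coordinate point $[0:\ldots:0:1]$. Since $G_0$ is maximally independent, the unique maximum independent set $I(\xbf)$ agreeing with $\xbf$ has cardinality $\#I(\obf) + \|\xbf\|$, yielding $(\xbf)_0 = Z_{G_0}(\lambda,\xbf) = \lambda^{\#I(\obf)+\|\xbf\|}(1 + O(\lambda^{-1}))$ with unit leading coefficient; normalizing by the dominant coordinate shows that $z_0(\lambda)\to [0:\ldots:0:1]$ in $\P^{(2^k-1)}$ as $|\lambda|\to\infty$. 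The scheme is then: (i) verify that $[0:\ldots:0:1]$ lies on $\MM_0$ and is fixed by $\widehat F$; (ii) invoke the normal superattraction from Theorem~\ref{thm: normally attracting} to obtain a neighborhood of this fixed point from which every orbit converges to a periodic orbit on $\MM_0$ within any prescribed distance of $[0:\ldots:0:1]$; (iii) conclude by choosing $C(\epsilon)$ so that $z_0(\lambda)$ lies in this neighborhood whenever $|\lambda|>C(\epsilon)$.

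For step (i), I would work in the affine chart centered at $[0:\ldots:0:1]$ and pass to an iterate of the recursion realizing assumptions \ref{assump_1}--\ref{assump_4}. In this chart the defining equations \eqref{eq: invariant projective manifold} for $\MM_\ebf$ rewrite as monomial identities that vanish at the origin, and the additional monomial conditions cutting $\MM_0$ out of $\MM_\ebf$ are likewise satisfied, so $[0:\ldots:0:1]\in\MM_0$. That the origin is fixed by $\widehat F$ follows from Example~\ref{ex: basis vectors dynamics projective singletons} combined with stability and non-degeneracy (which, via \ref{assump_2} and \ref{assump_3}, force $\#\Phi(j)=1$ and $\Sigma_{\Phi(j)}$ a singleton for every periodic $j$): the induced dynamics on $\MM_0$ acts as a permutation on the periodic coordinates $[\ebf_1]_n,\ldots,[\ebf_{k_0}]_n$, and any permutation fixes the origin. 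Absence of indeterminacy of $\widehat F$ at $[0:\ldots:0:1]$ is checked by evaluating \eqref{eq: recursion} at the corresponding extreme input configuration, where only a single summand $\Ybf=(\ibf,\ldots,\ibf)$ survives and stability ensures its contribution is a non-zero polynomial in $\lambda$.

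For step (ii), Theorem~\ref{thm: normally attracting} together with the standard local theory of normally hyperbolic superattracting submanifolds furnishes a neighborhood $V$ of $[0:\ldots:0:1]$ and a holomorphic retraction $\pi:V\to V\cap\MM_0$ with $\pi([0:\ldots:0:1])=[0:\ldots:0:1]$ and $\widehat F^{pn}\to \pi$ uniformly on compact subsets. Using continuity of $\pi$ and of the intermediate iterates $\widehat F,\widehat F^2,\ldots,\widehat F^{p-1}$ at the fixed point, $V$ can be shrunk so that every $\widehat F$-orbit starting in $V$ converges to a $p$-periodic orbit of $\widehat F$ on $\MM_0$ contained in the $\epsilon$-ball around $[0:\ldots:0:1]$. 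The asymptotic from the first paragraph then provides $C(\epsilon)$ such that $z_0(\lambda)\in V$ whenever $|\lambda|>C(\epsilon)$, completing the argument. The principal obstacle I anticipate is the cross-chart analysis in step (i): Sections~\ref{ss:recursion-indep-poly}--\ref{subsec: inv manifold} work exclusively in the $(\obf)$-chart, whereas the limit point $[0:\ldots:0:1]$ lies at infinity in that chart (the basis coordinates $[\ebf_j]_n$ all blow up), so one must carefully transfer the description of $\MM_0$ and the conclusion of Theorem~\ref{thm: normally attracting} into the new chart and verify that no new indeterminacies of $\widehat F$ appear near this limit point.
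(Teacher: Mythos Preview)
Your overall plan coincides with the paper's: exploit maximal independence to control the asymptotics of $z_0(\lambda)$ as $|\lambda|\to\infty$, and then invoke the normal superattraction of $\MM_0$ from Theorem~\ref{thm: normally attracting}. The difference lies in how the limit point is reached. You propose to work directly at $[0:\ldots:0:1]$, which sits at infinity for the $(\obf)$-chart, and you correctly flag the cross-chart transfer of the description of $\MM_0$ and of Theorem~\ref{thm: normally attracting} as the principal obstacle. The paper sidesteps this obstacle entirely by a $\lambda$-dependent diagonal rescaling $\llparenthesis\xbf\rrparenthesis_n := \lambda^{-\|\xbf\|}(\xbf)_n$. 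In these coordinates the maximally independent hypothesis gives $z_0(\lambda)\to[1:\ldots:1]$, a point that lies in the interior of the $(\obf)$-chart and visibly on $\MM_0$ (the defining equations~\eqref{eq:defining equations} read $1=\prod 1$). Theorem~\ref{thm: normally attracting} then applies with no chart change whatsoever, and one converts back to the original coordinates only at the very end. This is what buys the paper its short argument.

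There is also a genuine gap in your step~(i). Your indeterminacy check asserts that at the input where only $(\ibf)_n$ is nonzero the single surviving summand $\Ybf=(\ibf,\ldots,\ibf)$ contributes a nonzero polynomial ``by stability''. But stability only constrains the connecting graphs $\Sigma_{\Phi(j)}$ for \emph{periodic} labels $j$; for an arbitrary edge $e\in E(H)$ nothing prevents $Z_{\Sigma_e}(\lambda,\ibf|_e,\cdot)$ from vanishing identically (this happens whenever $\Upsilon_e$ sends two vertices of $e$ to adjacent vertices of $\Sigma_e$, since then no independent set of $\Sigma_e$ can contain the full image of $\Upsilon_e$). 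In that case $[0:\ldots:0:1]$ is an indeterminacy point of $\widehat F$, and one cannot speak of it being fixed or of applying superattraction there. The paper's rescaling avoids this too: for each finite $\lambda$ the point $[1:\ldots:1]$ in the rescaled coordinates corresponds to a point with \emph{all} homogeneous coordinates nonzero, safely away from any coordinate hyperplane.
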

\begin{proof}
    Without loss of generality, we may replace $\widehat{F}$ with a sufficiently high iterate so that all periodic labels become fixed under the induced dynamics. By Theorem~\ref{thm: normally attracting}, we may further assume that the fixed submanifold $\MM_0$ is normally superattracting.

    Next, we introduce new coordinates:
    $$
    \llparenthesis x_1, \ldots, x_k \rrparenthesis_n := \lambda^{-(x_1+ \cdots + x_k)} \cdot (x_1, \ldots, x_k)_n.
    $$
    It is easy to see that the recursion \eqref{eq: recursion} may then be rewritten in the following way:
    \begin{equation}\label{eq: recursion rescaled}
    \llparenthesis\xbf\rrparenthesis_{n+1} = \lambda^{-(x_1+ \cdots + x_k)} \cdot\sum_{\Ybf=\left(\ybf(1),\dots,\ybf(m) \right)}  \left(\prod_{i=1}^m \llparenthesis\ybf(i)\rrparenthesis_n\right) \cdot \left( \prod_{e \in E(H)} Z_{\Sigma_e}(\lambda, \Ybf|_e, \xbf|_e)\right).
\end{equation}
    Indeed, in the new coordinates, the marked vertices in the copies joined by each connecting graph are no longer double counted, since they are ignored in the variables $\llparenthesis \ybf(i)\rrparenthesis_n$.

    Let $\widehat\F:\P^{(2^k-1)}\to\P^{(2^k-1)}$ be the map $\widehat F$ in these new coordinates. Note that the defining equations for the induced fixed manifold of $\widehat\F$ remain the same. It follows that $\MM_0$ remains normally superattracting with respect to $\widehat\F$.

    Since $G_0$ is maximally independent, the starting value $z_0(\lambda)$ converges in the new coordinates to the point $[1: \ldots: 1]\in \P^{(2^k-1)}$ as $\lambda$ converges to infinity. Note that the point $[1: \ldots: 1]$ lies on the fixed manifold $\MM_0$. It follows that, for any $\epsilon'>0$,   there exists a $\delta(\epsilon')$-neighborhood of $[1: \ldots: 1]$ in $\P^{(2^k-1)}$ that is contracted under $\widehat\F$ to the $\epsilon'$-neighborhood of $[1: \ldots: 1]$ in the fixed manifold $\MM_0$. Hence, for $|\lambda|>C(\epsilon')$, the starting value $z_0(\lambda)$ is in the $\delta(\epsilon')$-neighborhood of $[1: \ldots: 1]$, and thus the $\widehat \F$-orbit of $z_0(\lambda)$ converges to a fixed point within the $\epsilon'$- neighborhood of $[1: \ldots: 1]$ in the fixed manifold $\MM_0$. After changing back to the original coordinates we obtain the desired statement.
\end{proof}

Our final claim in the main result is now an immediate consequence.

\begin{corollary}
    Let $(H,\Sigma,\Upsilon, \Phi)$ be a stable and expanding gluing data with parameters $m,k$, $\RR=\RR_{(H,\Sigma,\Upsilon, \Phi)}$ be the respective graph recursion, and $(G_n)_{n\geq 0}$ be the sequence of marked graphs defined by $\RR$ starting with a maximally independent  graph $G_0$. Then the zeros of the independence polynomials $Z_{G_n}(\lambda)$ are uniformly bounded.
\end{corollary}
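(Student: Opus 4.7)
The plan is to deduce the corollary directly from Proposition~\ref{prop: near infinity}. Writing $\ibf = (1, \ldots, 1)$ and $\|\xbf\| = x_1 + \cdots + x_k$, in the rescaled coordinates $\llparenthesis \xbf \rrparenthesis_n = \lambda^{-\|\xbf\|}(\xbf)_n$ introduced in that proof we have the identity
$$
Z_{G_n}(\lambda) \;=\; \sum_{\xbf \in \{0,1\}^k} (\xbf)_n \;=\; \lambda^k \, \llparenthesis \ibf \rrparenthesis_n \left(1 \;+\; \sum_{\xbf \neq \ibf} \lambda^{\|\xbf\|-k}\cdot \frac{\llparenthesis \xbf \rrparenthesis_n}{\llparenthesis \ibf \rrparenthesis_n}\right),
$$
which isolates $\xbf = \ibf$ as the unique top-degree contribution in $\lambda$. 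Since every other exponent $\|\xbf\| - k$ is strictly negative, the bracketed factor is $1 + O(1/|\lambda|)$ provided the ratios $\llparenthesis \xbf \rrparenthesis_n / \llparenthesis \ibf \rrparenthesis_n$ are uniformly bounded in $n$ for $|\lambda|$ large.

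To secure this uniform bound, I would fix a small $\epsilon > 0$ and combine Proposition~\ref{prop: near infinity} with the normal superattraction of the fixed submanifold $\MM_0$ from Theorem~\ref{thm: normally attracting}. After passing to an iterate if necessary, normal superattraction furnishes a forward-invariant trapping neighborhood $U$ of the portion of $\MM_0$ near $[1{:}\cdots{:}1] \in \P^{(2^k-1)}$ on which all ratios $\llparenthesis \xbf \rrparenthesis_n / \llparenthesis \ibf \rrparenthesis_n$ lie in, say, the disk $\{|w - 1| < 1/2\}$ and $\llparenthesis \ibf \rrparenthesis_n \neq 0$. The maximal independence of $G_0$ (used in the proof of Proposition~\ref{prop: near infinity}) guarantees that, in rescaled projective coordinates, $z_0(\lambda) \to [1{:}\cdots{:}1]$ as $|\lambda| \to \infty$; hence for $|\lambda|$ exceeding some threshold $C$ the starting point $z_0(\lambda)$ lies in $U$, and the full orbit $\{z_n\}_{n \ge 0}$ stays in $U$ for every $n$.

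Assembling the pieces, for $|\lambda| > C$ the bracket above has modulus at least $1 - (2^k-1)\cdot(3/2)/|\lambda|$, which is strictly positive once $|\lambda| > 2^{k+1}$; combined with $\lambda^k \llparenthesis \ibf \rrparenthesis_n \neq 0$, this gives $Z_{G_n}(\lambda) \neq 0$ uniformly in $n$ on $\{|\lambda| > R\}$ with $R := \max\{C, 2^{k+1}\}$, which is the desired boundedness. The main obstacle in this plan is upgrading the \emph{eventual} convergence asserted in Proposition~\ref{prop: near infinity} into a \emph{forward-invariance} statement valid at every step $n \ge 0$; this is precisely where one must invoke the quantitative trapping neighborhood supplied by Theorem~\ref{thm: normally attracting}, together with the continuity of $z_0(\lambda)$ in $\lambda$ near infinity, rather than just the asymptotic statement about limit points.
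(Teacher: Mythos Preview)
Your argument is correct and follows essentially the same route as the paper: factor $Z_{G_n}(\lambda)$ by pulling out the $\ibf$-coordinate, then use Proposition~\ref{prop: near infinity} together with the normal superattraction from Theorem~\ref{thm: normally attracting} to trap the entire orbit (not just its limit) near the fixed submanifold for $|\lambda|$ large, forcing the bracketed factor to be nonzero. The only cosmetic difference is that the paper carries out the final step in the original coordinates $(\xbf)_n$, where the orbit clusters near $[0{:}\cdots{:}0{:}1]$ and the ratios $(\xbf)_n/(\ibf)_n$ are themselves small, whereas you stay in the rescaled coordinates $\llparenthesis\xbf\rrparenthesis_n$, where the ratios are merely bounded and the smallness comes from the explicit factors $\lambda^{\|\xbf\|-k}$; these are equivalent via $(\xbf)_n/(\ibf)_n=\lambda^{\|\xbf\|-k}\,\llparenthesis\xbf\rrparenthesis_n/\llparenthesis\ibf\rrparenthesis_n$.
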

\begin{proof}
    Let $z_0=z_0(\lambda)$ be the initial value in $\P^{(2^k-1)}$, and $(z_n)_{n\geq 0}$ be the respective $\widehat F$-orbit.
    Recall that
    $$
    Z_{G_n}(\lambda) = \sum_{(\xbf)\in \{0,1\}^k} (\xbf)_n = (\ibf)_n\cdot\left(1+ \sum_{(\xbf)\neq (\ibf)} \frac{(\xbf)_n}{(\ibf)_n}\right),
    $$
    where $(\ibf)=(1,\dots, 1)$.

    By Proposition~\ref{prop: near infinity}, the points $z_n$ converge to a periodic orbit that lies arbitrarily close to $[0: \ldots: 0: 1]\in \P^{(2^k-1)}$ for $\lambda$ near infinity. In fact, it follows from the proof of Proposition~\ref{prop: near infinity} that the entire orbit lies arbitrarily close to this periodic orbit for sufficiently large $\lambda$.
    It follows that $Z_{G_n}(\lambda)$ cannot be zero for $|\lambda|$ sufficiently large, which completes the proof.
\end{proof}

\bibliographystyle{alpha}
\bibliography{article}

\end{document}